\theoremstyle{plain}
\newtheorem{theorem}{Theorem}[section]
\newtheorem{lemma}[theorem]{Lemma}
\newtheorem{proposition}[theorem]{Proposition}
\newtheorem{cor}[theorem]{Corollary}
\theoremstyle{definition}
\newcommand{\cond}{\textbf{C}\!\!}
\newtheorem{Hypo}{Hypothesis}
\theoremstyle{remark}
\newtheorem{remark}[theorem]{Remark}
\newtheorem*{notation*}{Notation}
\numberwithin{equation}{section}
\newcommand{\rset}{\mathbb{R}}
\newcommand{\ceil}[1]{\lceil{#1}\rceil}
\newcommand{\floor}[1]{\lfloor{#1}\rfloor}
\newcommand{\un}{\bm{1}}
\newcommand{\PP}{\mathbb{P}}
\newcommand{\esp}{\operatorname{\mathbb{E}}}
\newcommand{\var}{\operatorname{\mathbf{Var}}}
\newcommand{\PE}{P}
\DeclareMathOperator*{\argmin}{arg\,min}
\DeclareMathOperator{\tr}{tr}
\DeclareMathOperator{\Span}{span}
\DeclareMathOperator{\supp}{supp}
\DeclareMathOperator{\pen}{pen}
\DeclareMathOperator{\HS}{\mathcal{HS}}
\newcommand{\calH}{\mathcal{H}}
\newcommand\ie{\emph{i.e.}\xspace }
\newcommand\eg{\emph{e.g.}\xspace }
\newcommand\iid{\ensuremath{\mathit{i.i.d.}}\xspace }
\newcommand\cdf{\emph{c.d.f.}\xspace}
\newcommand{\point}{\,\cdot\,}
\newcommand{\as}[1]{\textcolor{red}{[Anne: {#1}]}}
\newcommand{\sphere}{\mathbb{S}^{d-1}}
\newcommand{\cone}{\mathcal{C}}
\newcommand{\subsp}{V}
\newcommand{\proj}{\mathbf{\Pi}}
\newcommand{\ang}{\Theta}
\newcommand{\angf}{\theta}
\newcommand{\vertiii}[1]{{\left\vert\kern-0.25ex\left\vert\kern-0.25ex\left\vert #1
    \right\vert\kern-0.25ex\right\vert\kern-0.25ex\right\vert}}
\newcommand{\R}{\mathbb{R}}
\newcommand{\N}{\mathbb{N}}
\newcommand{\eps}{\varepsilon}
\newcommand{\VV}{\mathcal{V}}
\begin{document}
\title{Principal Component Analysis for Multivariate Extremes}
\author[1]{Holger Drees}
\author[2]{Anne Sabourin}
\affil[1]{University of Hamburg, Department of Mathematics, Germany}
\affil[2]{LTCI, T\'{e}l\'{e}com Paris, Institut polytechnique de Paris, France}
\maketitle


\begin{abstract}

The first order behavior of
    multivariate heavy-tailed random vectors  above large radial thresholds is ruled by a limit measure in a regular variation
    framework. For a
    high dimensional vector, a reasonable assumption is that the
    support of this measure is concentrated on a lower dimensional
    subspace, meaning that certain linear combinations of the
    components are much likelier to be large than others. Identifying
    this subspace and thus reducing the dimension will facilitate a
    refined statistical analysis.  In this work we apply
  Principal Component Analysis (PCA) to a re-scaled version of
radially thresholded observations. 

  Within the statistical learning framework of empirical risk
  minimization, our main focus is to analyze the squared
  reconstruction error for the exceedances over large radial
  thresholds. We prove that the empirical risk converges to the true
  risk, uniformly over all projection subspaces. As a consequence, the
  best projection subspace is shown to converge in probability to the
  optimal one, in terms of the Hausdorff distance between their
  intersections with the unit sphere. In addition, if the exceedances
  are re-scaled to the unit ball, we obtain finite sample uniform
  guarantees to the reconstruction error pertaining to the estimated
  projection subspace. Numerical experiments illustrate the relevance
  of the proposed framework for practical purposes.\\~

  \noindent{\bf Key words:} Principal Component Analysis, Multivariate extreme value analysis, dimensionality reduction, Empirical Risk Minimization.\\~

  \noindent{\bf MSC primary} 62G32; {\bf secondary} 62H25. 
\end{abstract}

\section{Introduction}
\label{sec:intro}

If one wants to analyze the tail behavior of an $\R^d$-valued random vector $ X =(X^1,\ldots,X^d)$ one usually assumes that $X$ is regularly varying (if necessary after a standardization of the marginal distributions), \ie there exists a normalizing function $b$ and a non-zero 
measure $\mu$ on $\R^d\setminus\{0\}$ such that 
\begin{equation}
  \label{eq:rvalpha}
\mu_t(\point):=  (b(t))^{-1}\PP(X \in tB )\;\xrightarrow[t\to\infty]{} \mu(B)<\infty
\end{equation}
for all $\mu$-continuous Borel sets $B$ that are bounded away from the
origin. Equation~\eqref{eq:rvalpha} may be understood as a
generalization to arbitrary dimension of a heavy-tail assumption
regarding a real-valued random variable.  This mathematical
  framework is particularly useful in situations where the focus is on `tail events'
   of the kind $\{X\in B\}$ where the distance to
  the origin $u = \inf\{\|x\|: x \in B\}$ is large, for some norm
  $\|\cdot\|$. In a risk management context, the probability of such
  tail events is of crucial importance.  If the distance $u$ is
  so large that few or no data are available in the considered region,
  all attempts to resort to empirical estimation are in vain. One common idea behind statistical methods based on Extreme Value Theory (EVT) is to use a small proportion of the available data (those with a comparatively large norm) to learn an estimate for $\mu$, to be used for quantifying the probability of tail events.

\subsection{Regular Variation}
A substantial reference concerning the probabilistic  aspects of regular variation in the setting of EVT is~\cite{resnick2013extreme}, see also  \cite{resnick2007heavy} for application-oriented examples. Regular variation for Borel measures on  Polish spaces has since  been revisited in \cite{hult2006regular}  and \cite{lindskogetal14}.
It is well known that if Equation~\eqref{eq:rvalpha} holds true, then the limit measure $\mu$ is homogeneous of order $-\alpha$ for some $\alpha>0$. Moreover, the normalizing function $b$ and the norm $\|X\|$ are regularly varying, too: $b(tx)/b(t)\to x^{-\alpha}$ and $\PP\{\|X\|>tx\}/ \PP\{\|X\|>t\}\to x^{-\alpha}$ as $t\to\infty$ for all $x>0$. Here $\|\cdot\|$ may be any norm on $\R^d$, but in what follows we only consider the Euclidean norm.

Because the limit measure is homogeneous, after a polar transformation, it can be decomposed into a so-called spectral (or angular)  probability measure $H$ and an independent radial component, that is
\begin{equation} \label{eq:spectraldef}
\mu\Big\{x\in \R^d \;:\; \|x\|>r, \frac{x}{\|x\|} \in A\Big\} = cr^{-\alpha}  H(A),
\end{equation}
for all $r>0$ and all Borel subsets $A$ of the unit sphere, with $c:= \mu\{x\;:\;\|x\|>1\}$.
Whereas the literature is plentiful concerning the design and the asymptotics of  flexible multivariate parametric or non-parametric models for $\mu$ or integrated versions of it (see \eg \cite{segers2012,fougeres2015bias,einmahl2001nonparametric,genest2009rank,rootzen2018multivariate2}, or \cite{beirlant2006statistics} and the references therein), the issue of how to escape the curse of dimensionality has only recently been raised (see below). One reason for this may be  that a major application of EVT is  environmental, spatial extremes such as heavy rainfalls, heat waves, droughts or  floods.  In this context, max-stable or generalized Pareto  spatial models are widely used (\cite{padoan2010likelihood,ferreira2014generalized,schlather2002models}) which have built in a priori information about the spatial dependence structure, thus reducing the effective dimension.

\subsection{Dimensionality  reduction for extreme values, a brief overview}

For applications such as \emph{e.g.} anomaly detection or network monitoring  where no particular structure is known a priori,  dimensionality reduction suggests itself as a preliminary step before implementing any kind of learning procedure and
  the subject   is recently receiving increasing attention.   If  $d$ is moderate or large, the measure $\mu$ (and hence $H$) will often exhibit some `sparse' structure.  For example, if some of the components of $X$ are asymptotically independent, \ie for some index set $I\subset \{1,\ldots,d\}$ of size $|I|\in\{1,\ldots, d-1\}$
$$ \PP\Big\{\max_{i\in I} |X^i|>t,\max_{i\not\in I} |X^i| > t\Big\} = o\Big(\PP\Big\{\max_{1\le i\le d} |X^i|>t\Big\}\Big),
$$
then $\mu$ is concentrated on $\{x\in\R^d\mid \max_{i\in I}|x_i|=0 \text{ or } \max_{i\not\in I}|x_i|=0\}$.
More generally, one may consider the case where only a small number of subsets of components $\{I_k \subset \{1,\ldots, d\}, k = 1,\ldots, K  \}$ are likely to be large simultaneously, while the other components remain small. Here, `small number' is understood relatively to the $2^d -1$ non empty possible subsets of components. This setting applies \emph{e.g.} to heavy rainfalls in a spatial setting (storms are usually localized, so that neighboring sites are more likely to be concomitantly impacted) or of shocks over different assets of a financial portfolio. \cite{chautru2015dimension} proposes a clustering approach combined with spherical data analysis to detect structures of this type.
\cite{goix2016sparse,goix2017sparse}
propose an  algorithm with moderate computational cost (linear in the dimension and the sample size) and finite sample uniform guarantees. Their error bounds are linear in $d$ and scale as $1/\sqrt{k}$, where $k$ is the number of order statistics of each component which are considered extreme during the training step. A refinement of the latter framework is proposed in the yet unpublished work of \cite{simpson2018determining}.  \cite{chiapino2016feature} and  \cite{chiapino2019identifying} aim at identifying subgroups of components for which the  probability of a joint excess over a large quantile is not negligible compared to that of an excess by a single component. \cite{EngelkeHitz2018} use graphical models to reduce the complexity of the extremal dependence structure.  In a regression context, \cite{gardes2018tail} sets up a mathematical framework for tail dimension reduction suited to the case where the distribution of the target variable above high thresholds  only depends on the projection of the covariates on a lower dimensional subspace. Consistency of K-means clustering applied to the most extreme observations of a data set  has recently been proven in the unpublished work of~\cite{janssen2019k}.

 \subsection{Principal component analysis (PCA)  and support identification}
 Here we focus on finding a linear subspace on which $\mu$ is (nearly) concentrated. In a classical setting, when $\|X\|$ has finite second moments, PCA (\cite{anderson1963asymptotic}) is the method of choice to determine such supporting linear subspaces if \iid random vectors $X_i$, $1\le i\le n$, with the same distribution as $X$ are observed.  Theoretical guarantees obtained  so far concern  the   reconstruction error (\cite{koltchinskii2000random,shawe2005eigenspectrum,blanchard2007statistical,koltchinskii2016new}) or the approximation error for the eigenspaces of the covariance matrix (\cite{zwald2006convergence}), under the assumption that the sample space (or the feature space for Kernel-PCA) has finite diameter or that sufficiently high order moments exist.

For motivation of our version of PCA, it is useful to keep the following working hypothesis in mind, although it is not required for most results to hold:
\begin{Hypo}\label{hypo:sparse}
The vector space $V_0 = \Span(\supp \mu)$  generated by the support of $\mu$ has dimension  $p< d$.
\end{Hypo}
Note that then the points $(X_i/t) \un\{\|X_i\|>t\}$ are more and more concentrated on a neighborhood of $V_0$ as $t$ increases, but that usually they will not lie on $V_0$. If the dimension $p$ of $V_0$ is known, then it suggests itself to approximate $V_0$ by the subspace of dimension $p$ which is `closest' in expectation to these points.

In PCA one measures the closeness by the squared Euclidean distance which hugely alleviates the optimization problem as one may work with orthogonal projections in  the Hilbert space $L_2$.
However, this approach assumes finite second moments that cannot be taken for granted in the above setting. Indeed, if $\alpha<2$ then $\esp(\|X_i\|^2)=\infty$. Hence, we will instead consider re-scaled vectors
\begin{equation}  \label{eq:rescaling}
  \Theta_i := \omega(X_i)X_i, \quad 1\le i\le n,
\end{equation}
where $\omega:\R^d\to (0,\infty)$ is a suitable scaling function. The most common choice is $\omega(x)=1/\|x\|$, leading to $\Theta_i$ on the unit sphere, and we will focus on this re-scaling when we derive finite sample bounds on the reconstruction error (see Section \ref{sec:recover}). However, consistency results will be proved for considerably more general scaling functions; cf.\ Section \ref{sec:probaBackground}.

To the best of our knowledge, the only existing work considering PCA properly speaking for high dimensional extremes is the unpublished paper of \cite{cooley2016decompositions}. The authors discuss a transformation mapping negative observations to small positive ones and apply PCA in this transformed space. They also use a preliminary re-scaling involving  the norm of the transformed vector. They illustrate their approach with simulations and real data examples, without deriving theoretical statistical guarantees.

\subsection{Notation and risk minimization setting}
To give a formal description of our method, we first introduce some notation. All random variables are defined on some probability space $(\mathcal{X},\mathcal{A},\PP)$; the expectation with respect to $\PP$ is denoted by $\esp$.
For $x\in\R^d$ and $t>0$, let
\begin{equation} \label{eq:angnot}
  \begin{aligned}
    \angf(x) & =  \omega(x) x ,\\
    \angf_t(x) & =  \omega(x) x \un\{\|x\|>t\},\\
    \ang &= \theta(X) = \omega(X) X,\\
    \ang_t &  =\angf_t(X) =  \ang\un\{\|X\|  > t\}.
  \end{aligned}
\end{equation}

By  $P$ we denote the distribution of $X$ and by $P_t$ its conditional distribution given that $\|X\|>t$, i.e. $P_t(\cdot)=\PP(X\in\cdot \mid \|X\|>t\}$. Then $P_\infty:=\mu/\mu((B_1(0))^c)$  is the weak limit of $P_t(t\cdot)$ (with $B_1(0)$ denoting the closed unit ball); cf.\ \eqref{eq:rvalpha}.

For any probability measure $Q$ and any $Q$-integrable function  $f$, we denote the expectation of $f$ with respect to $Q$ by $Qf$ or $Q(f)$. By $\esp_t$ we denote the conditional expectation (with respect to $\PP$) given $\|X\|>t$ so that $\esp_t(f(X))=P_t(f)$, provided the expectations exist.

For any linear subspace $V\subset\R^d$, let  $\proj_V$ be the  orthogonal projection onto $V$
(or the associated projection  matrix), and let $\proj_V^\perp$ be the orthogonal projection onto the orthogonal complement $V^\perp$  of $V$.

To apply PCA to the re-scaled vectors, we have to assume that the scaling function $\omega$ is chosen such that $\esp(\|\Theta\|^2)=P(\|\theta\|^2)<\infty$ and $P_\infty(\|\theta\|^2)<\infty$. Note that this condition is always fulfilled if there exist $\beta>1-\alpha/2$ and $c>0$ such that $\omega(x)\le c\|x\|^{-\beta}$ for all $x\in\R^d$. For simplicity's sake, in what follows we will impose the following stronger homogeneity condition:
\begin{equation} \label{eq:omegacond}
 \exists\,\beta\in \Big(1-\frac\alpha 2,1\Big]\;\forall\,\lambda>0, x\in\R^d:\quad   \omega(\lambda x) = \lambda^{-\beta} \omega(x) \quad \text{and} \quad c_\omega:=\sup_{x\in\sphere} \omega(x)< \infty,
\end{equation}
where $\sphere:=\{x\in\R^d\,:\, \|x\|=1\}$ denotes the unit sphere. Note that then $\|\angf(x)\|\le c_\omega\|x\|^{1-\beta}$. The choice $\omega(x)=\|x\|^{-\beta}$ seems natural, but different choices allow for focusing on particular aspects of the extreme value behavior. For instance, if one is only interested in the positive components of $X$, one may choose $\omega(x)=\|x\|^{-\beta}\un_{[0,\infty)^d}(x)$.

Hypothesis 1 is equivalent to the statement that $\inf_{V:\dim(V)=p} R_\infty(V)=0$ and $\inf_{V:\dim(V)=p'} R_\infty(V)>0$ for all $p'<p$ where
$$  R_\infty(V):= P_\infty \|\proj_V \angf-\angf\|^2  = P_\infty \|\proj_V^\perp \angf\|^2
$$
and the infima are taken over all linear subspaces of the specified dimension. The risk $R_\infty$ may be interpreted as the expected reconstruction error in the limit model if the re-scaled observation $\ang$ is replaced with its lower dimensional approximation $\proj_V\ang$.
Since $P_t(t\cdot)\to P_\infty(\cdot)$ weakly, one may approximate $V_0$ by a subspace $V_t^*=V_t^{p*}$ of dimension $p$ which minimizes the conditional risk
\begin{equation}
  \label{eq:finiteRisk}
  R_t(\subsp):=  P_t\big(\| \proj_V^\perp{\angf}) \|^2 \big) =  \esp_t\big( \| \proj_V^\perp{\ang} \|^2\big)
\end{equation}
 given that $\|X\|$ exceeds a high threshold $t>0$. Note that $V_t^*$ may be of interest even if Hypothesis 1 only holds approximately, in the sense that $P_\infty$ concentrates most of its mass on a small neighborhood of a $p$-dimensional subspace.

It is natural to `estimate' $V_t^*$ (and thus $V_0$) by a minimizer of the corresponding empirical risk
$$ \hat R_t(V) := \frac 1{N_t} \sum_{i=1}^n \|\proj_V^\perp \Theta_i\|^2 \un\{\|X_i\|>t\}\quad \text{with}\quad N_t :=\sum_{i=1}^n \un\{\|X_i\|>t\}. $$
 Here the threshold $t$ must be chosen suitably, depending on the sample size. To this end, often order statistics of the norms of the observed vectors are used, and we follow this approach. Let $X_{(j)} = X_{\sigma(j)}$ where $\sigma$ is a permutation of indices such that
$\|X_{(1)}\|\ge \|X_{(2)}\|\ge \dotsb \ge \|X_{(n)}\|$. (For brevity, we suppress the dependence on $n$ in our notation of order statistics.)
For  $1\le k\le n$, denote by $\hat{t}_{n,k} = \| X_{(k+1)}\|$ the empirical quantile of level $1-k/n$ for $\|X\|$. We define the empirical risk for the subspace $V$ related to the $k$ largest observations as
 \begin{equation}
   \label{eq:empiricalRisk}
   R_{n,k} (V)
   = \frac{1}{k}\sum_{i=1}^n \| \proj_V^\perp{\ang}_{i,\hat{t}_{n,k}} \|^2
 \end{equation}
 where $\ang_{i,t} =  \angf_t(X_{i})$ in accordance with the notation introduced in \eqref{eq:angnot}. A minimizer of $ R_{n,k} (V)$ among all linear subspaces of dimension $p$ will be denoted by $\hat V_n = \hat V_n^{p}$.
 It is the main goal of the present paper to analyze the asymptotic and the finite sample behavior of the empirical risk $R_{n,k} (V)$ and its minimizer $\hat V_n$.

 \subsection{Outline}

 In Section 2 we will first show that the minimizer of the risk $R_t$ based on a finite threshold $t$ converges to the minimizer of the limit risk $R_\infty$, and thus under Hypothesis 1 to $V_0$, as $t\to\infty$. Moreover, we show consistency of the empirical minimizer $\hat V_n$ under condition \eqref{eq:omegacond} with suitable $\beta$. In Section 3, we derive non-asymptotic uniform bounds on  $|R_{n,k}(V)-R_{t_{n,k}}(V)|$  and $|\hat R_t(V)-R_t(V)|$ for the most important scaling $\omega(x)=1/\|x\|$. Furthermore, we construct uniform confidence bands for $R_t(V)$. The results obtained in a simulation study are reported in Section 4. In particular, we explore the choice of the dimension $p$ based on empirical risk plots and the effect of a PCA projection on estimators of probabilities expressed in terms of the spectral measure $H$. Finally, Section 5 contains some details about the proof of a modification of a result by \cite{blanchard2007statistical}.

\section{Consistency of risk minimizers}\label{sec:probaBackground}

In this section we first discuss how to calculate minimizers of the conditional risk $R_t$ given $\|X\|>t$ and the empirical risk $R_{n,k}$. Moreover, we prove that these converge in some sense towards a minimizer of $R_\infty$.

It is well known that a point of minimum of  $V\mapsto\esp\|\proj_V^\perp Y\|^2$ can be derived from the spectral analysis of the matrix of second (mixed) moments of $Y$:
\begin{lemma}
  \begin{enumerate}
    \item Let $Y$ be an $\R^d$-valued random vector with $\esp(\|Y\|^2)<\infty$ and $\Sigma:=\esp(YY^\top)$. Let $\lambda_1\ge \lambda_2\ge\cdots\ge\lambda_d\ge 0$ denote the eigenvalues of $\Sigma$ with corresponding orthogonal eigenvectors $x_1,\ldots,x_d$. Then $V^*=\Span(x_1,\ldots,x_p)$ minimizes $\esp(\|\proj_V^\perp Y\|^2)$ among all linear subspaces $V$ of dimension $p$. In the case $\lambda_p>\lambda_{p+1}$ it is the unique minimizer.
    \item If the scaling condition \eqref{eq:omegacond} holds and $\lambda_1\ge \lambda_2\ge\cdots\ge\lambda_d\ge 0$ denote the eigenvalues of $\Sigma_t:=\esp_t(\ang \ang^\top)$ with corresponding orthogonal eigenvectors $x_1,\ldots,x_d$, then $V^*=\Span(x_1,\ldots,x_p)$ minimizes $R_t(V)$ among all linear subspaces $V$ of dimension $p$. In the case $\lambda_p>\lambda_{p+1}$ it is the unique minimizer.
    \item If the scaling condition \eqref{eq:omegacond} holds and $\lambda_{n,1}\ge \lambda_{n,2}\ge\cdots\ge\lambda_{n,d}\ge 0$ denote the eigenvalues of $\Sigma_{n,k}:=k^{-1} \sum_{i=1}^n({\ang}_{i,\hat{t}_{n,k}}{\ang}_{i,\hat{t}_{n,k}}^\top)$ with corresponding orthogonal eigenvectors $x_{n,1},\ldots,x_{n,d}$, then $\hat V_n=\Span(x_{n,1},\ldots,x_{n,p})$ minimizes $R_{n,k}(V)$ among all linear subspaces $V$ of dimension $p$.
    \end{enumerate}
\end{lemma}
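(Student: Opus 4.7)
My plan is to prove all three parts as instances of a single classical PCA statement: for any probability measure $Q$ on $\R^d$ with $Q(\|y\|^2)<\infty$ and Gram matrix $\Sigma_Q:=Q(yy^\top)$, a minimizer of $V\mapsto Q(\|\proj_V^\perp y\|^2)$ over $p$-dimensional linear subspaces is the span of the top $p$ eigenvectors of $\Sigma_Q$, and this minimizer is unique whenever $\lambda_p(\Sigma_Q)>\lambda_{p+1}(\Sigma_Q)$. The reduction starts from the orthogonal decomposition $\|y\|^2=\|\proj_V y\|^2+\|\proj_V^\perp y\|^2$ which, upon integration, gives
\[
Q(\|\proj_V^\perp y\|^2)=Q(\|y\|^2)-Q(\|\proj_V y\|^2).
\]
Since the first term does not depend on $V$, minimizing the reconstruction error is equivalent to maximizing $Q(\|\proj_V y\|^2)$.

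Next I would parameterize the projection by writing $\proj_V=UU^\top$ for a $d\times p$ matrix $U$ with $U^\top U=I_p$ whose columns form an orthonormal basis of $V$, and rewrite
\[
Q(\|\proj_V y\|^2)=\tr\bigl(U^\top\Sigma_Q U\bigr).
\]
Maximizing $\tr(U^\top\Sigma_Q U)$ over matrices with orthonormal columns is a standard finite-dimensional eigenvalue problem; by the Ky Fan maximum principle (equivalent to the Courant--Fischer min-max formulas), the maximum equals $\lambda_1+\cdots+\lambda_p$ and is attained exactly at matrices $U$ whose column span is an eigenspace of $\Sigma_Q$ corresponding to its top $p$ eigenvalues, with uniqueness of the column span precisely when $\lambda_p>\lambda_{p+1}$. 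Translating back from $U$ to $V=\Span(U)$ yields the boxed fact above.

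Finally, I would instantiate this fact with three choices of $Q$, after checking the integrability hypothesis $Q(\|y\|^2)<\infty$. For (i), $Q$ is the law of $Y$ and integrability is directly assumed. For (ii), $Q=P_t$ acting on $y=\theta(x)$, so $\Sigma_Q=\Sigma_t$ and the reconstruction functional equals $R_t(V)$; the finite-second-moment condition $P_t(\|\theta\|^2)<\infty$ is the only real check, and it follows from the scaling assumption \eqref{eq:omegacond}: homogeneity and $c_\omega<\infty$ give $\|\theta(x)\|\le c_\omega\|x\|^{1-\beta}$, and since $\beta>1-\alpha/2$ forces $2(1-\beta)<\alpha$, regular variation of $\|X\|$ with index $-\alpha$ yields $\esp\bigl(\|X\|^{2(1-\beta)}\un\{\|X\|>t\}\bigr)<\infty$. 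For (iii), $Q$ is the empirical measure $k^{-1}\sum_{i=1}^n\delta_{\Theta_{i,\hat t_{n,k}}}$, whose second moment matrix is $\Sigma_{n,k}$ and whose reconstruction functional is $R_{n,k}(V)$; integrability is trivial because this measure has finite support. I do not foresee a substantive obstacle: beyond invoking the textbook spectral optimization, the only non-cosmetic content is the verification of finite second moments for $\theta(X)$ under $P_t$, which is immediate from the scaling condition and the regular variation of $\|X\|$.
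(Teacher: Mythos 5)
Your proposal is correct and follows essentially the same strategy as the paper: the paper cites Seber (1984, Theorem 5.3) for the classical PCA fact (i) and then obtains (ii) and (iii) ``directly by an application of (i) with $Y$ equal to $\ang$ conditional on $\|X\|>t$, respectively a random variable according to the empirical distribution'' of the thresholded $\ang_i$. Your only departure is cosmetic: instead of citing the textbook result you sketch its proof via the decomposition $Q(\|\proj_V^\perp y\|^2)=Q(\|y\|^2)-\tr(U^\top\Sigma_Q U)$ and the Ky Fan maximum principle, which is a perfectly standard and equivalent route, and your verification that \eqref{eq:omegacond} guarantees $P_t(\|\theta\|^2)<\infty$ matches the remark the paper makes just before the lemma.
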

A proof of assertion (i) can e.g.\ be found in \cite{seber84}, Theorem 5.3, where also other optimality properties of the minimizers are given. Both the other results follow directly by an application of (i) with $Y$ equal to $\ang$ conditional on $\|X\|>t$, respectively a random variable according to the empirical distribution of the ${\ang}_{i}$ for which $\|X_i\|>\hat{t}_{n,k}$. If $\lambda_p=\lambda_{p+1}$, then the minimizer is not unique. With $m=\min\{i\in\{1,\ldots,p\} \;:\; \lambda_i=\lambda_p\}$ any minimizer $V_t^*$ of $R_t$ can be represented as $V_t^*=\Span(x_1,\ldots,x_{m-1},\tilde x_m,\ldots,\tilde x_p)$ where $\tilde x_m,\ldots,\tilde x_p$ are orthogonal eigenvectors to the eigenvalue $\lambda_p$ and all these subspaces are minimizers. An analogous statement holds for the empirical risk.

Next we discuss the relationship between $R_t$ and $R_\infty$ and their respective minimizers.
The convergence of the risks is an immediate consequence of the following simple lemma.
\begin{lemma} \label{lem:intconv}
 Let $f:\R^d\to \R$ be a measurable function that is locally bounded, $P_\infty$-a.e.\ continuous and satisfies $\limsup_{\|x\|\to\infty} |f(x)|\|x\|^{-\tilde\alpha}<\infty$ for some $\tilde\alpha<\alpha$. Then $\lim_{t\to\infty} \int f(x/t)\, P_t(dx) = \int f(x)\, P_\infty(dx)$.
\end{lemma}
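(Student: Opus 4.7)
The plan is to exploit the weak convergence $Q_t\Rightarrow P_\infty$, where $Q_t$ denotes the pushforward of $P_t$ under $x\mapsto x/t$ (this is the statement made in the text right after the definition of $P_\infty$), and to extend it from bounded continuous test functions to the class in the lemma via truncation combined with a uniform-integrability estimate furnished by Potter's inequalities. The change of variables $y=x/t$ gives $\int f(x/t)\,P_t(dx)=\int f(y)\,Q_t(dy)$; both $Q_t$ and $P_\infty$ are probability measures supported in $\{y:\|y\|>1\}$, and homogeneity of $\mu$ ensures $P_\infty(\{\|y\|=r\})=0$ for every $r>0$, so truncation at any radius $M>1$ is unproblematic.

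For the truncated part $g_M(y):=f(y)\mathbf{1}\{1<\|y\|\le M\}$, the function $g_M$ is bounded (by local boundedness of $f$ on the compact shell) and its set of discontinuities lies in $\{\|y\|=1\}\cup\{\|y\|=M\}\cup D_f$ with $D_f$ the $P_\infty$-null discontinuity set of $f$; hence $g_M$ is $P_\infty$-a.e.\ continuous. The Portmanteau theorem then yields $\int g_M\,dQ_t\to\int g_M\,dP_\infty$ as $t\to\infty$.

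The main technical point is controlling the tails uniformly in $t$. Setting $F_t(r):=\PP(\|X\|>tr)/\PP(\|X\|>t)$ and picking $\delta\in(0,\alpha-\tilde\alpha)$, Potter's inequalities provide constants $C_\delta$ and $t_0$ such that $F_t(r)\le C_\delta r^{-\alpha+\delta}$ for all $r\ge 1$ and $t\ge t_0$. Combined with the growth hypothesis $|f(y)|\le C\|y\|^{\tilde\alpha}$ for $\|y\|\ge M_0$, the layer-cake identity gives, for $M\ge M_0$,
$$\int_{\|y\|>M}|f(y)|\,Q_t(dy)\le C\Bigl(M^{\tilde\alpha}F_t(M)+\int_M^\infty \tilde\alpha r^{\tilde\alpha-1}F_t(r)\,dr\Bigr)=O\bigl(M^{\tilde\alpha-\alpha+\delta}\bigr),$$
uniformly in $t\ge t_0$, and vanishing as $M\to\infty$. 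The analogous bound for $P_\infty$ follows directly from $P_\infty(\{\|y\|>r\})=r^{-\alpha}$. A routine $\epsilon/3$ argument then closes the proof.

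The chief obstacle is precisely this uniformity in $t$: the pointwise limit $F_t(r)\to r^{-\alpha}$ alone is not enough to interchange the limits in $t$ and $M$, and Potter's inequalities, which give a tail domination valid for all $r\ge 1$ and all sufficiently large $t$, are what make the truncation argument go through.
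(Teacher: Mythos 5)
Your proof is correct, and it is built on the same backbone as the paper's: the weak convergence of the rescaled exceedance law $Q_t=P_t(t\,\cdot\,)$ to $P_\infty$ plus a uniform control of the contribution of $\{\|y\|>M\}$ that allows one to pass from bounded $P_\infty$-a.e.\ continuous test functions to the polynomially growing $f$. The technical machinery differs, though. The paper verifies \emph{asymptotic uniform integrability} by bounding a moment of slightly higher order than $\tilde\alpha$ (namely $\esp\|Y_t\|^\tau\le 2\tau/(\alpha-\tau)$ for $\tau<\alpha$, obtained via integration by parts and Karamata's theorem) and then cites a packaged convergence-of-expectations result (van der Vaart, Thm.\ 2.20). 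You instead truncate explicitly, control the tail via Potter's inequalities (the uniform bound $F_t(r)\le C_\delta r^{-\alpha+\delta}$ for $r\ge1$, $t\ge t_0$), and close with an $\epsilon/3$ argument. The two are essentially equivalent in content — a uniform moment bound of order $\tilde\alpha(1+\eps)$ and a uniform Potter-type tail bound each imply the other — and comparable in effort. Your route is a bit more self-contained (no appeal to an external UI theorem), while the paper's moment bound \eqref{eq:mombound} has the minor advantage of being reusable elsewhere, which the authors indeed exploit in the proofs of Corollary \ref{cor:Rtconverges}, Lemma \ref{lem:equicont}, and Proposition \ref{prop:empriskconsist}.
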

\begin{proof}
  According to \eqref{eq:rvalpha}, $P_t(t\point)=\PP(X\in t\point\mid \|X\|>t)\to \mu(\point)/\mu((B_1(0))^c)= P_\infty(\point)$ weakly. Let $Y_t$ be a random vector with distribution $P_t(t\point)$ and $Y_\infty$ a random vector with distribution $P_\infty$. Since $\int f(x/t)\, P_t(dx)=\esp f(Y_t)$, the assertion follows if the $f(Y_t)$ are asymptotically uniformly integrable (see \cite{van2000asymptotic}, Theorem 2.20).

  By assumption $f(Y_t)$ can be bounded by a multiple of $1+\|Y_t\|^{\tilde\alpha}$. Now, for all $\tau\in [0,\alpha)$ and $t\ge t_0$ for some sufficiently large $t_0$, integration by parts, regular variation of $u\mapsto u^{\tau-1}\PP\{\|X\|>u\}$ and Karamata's theorem yield
  \begin{align}
   \esp \|Y_t\|^{\tau} & = \int \|x/t\|^{\tau}\, P_t(dx)\nonumber\\
   & = \frac{t^{- \tau}}{\PP\{\|X\|>t\}} \int_t^\infty u^{\tau}\, \PP^{\|X\|}(du)\nonumber\\
   &  = \frac{t^{- \tau}}{\PP\{\|X\|>t\}} \tau \int_t^\infty u^{\tau-1}\PP\{\|X\|>u\}\, du\nonumber\\
   & \le 2 \frac{t^{- \tau}}{\PP\{\|X\|>t\}} \tau \frac{t^{\tau}\PP\{\|X\|>t\}}{\alpha-\tau}\nonumber\\
   & = 2\frac{\tau }{\alpha-\tau}.   \label{eq:mombound}
  \end{align}
  In particular, $\sup_{t\ge t_0}\esp\|Y_t\|^{\tilde\alpha(1+\eps)}<\infty$ for $\eps\in (0,\alpha/\tilde\alpha-1)$, so that $\|Y_t\|^{\tilde\alpha}$ and thus $f(Y_t)$ are asymptotically uniformly integrable.
\end{proof}

\begin{cor}\label{cor:Rtconverges}
Suppose that $\omega$ fulfills condition \eqref{eq:omegacond}. Then,  for any subspace $V$ of $\rset^d$, the suitably standardized associated finite threshold risk converges:
\[ \lim_t t^{2(\beta-1)} R_t(V)   = R_\infty(V). \]
\end{cor}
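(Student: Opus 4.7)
The plan is to reduce the claim to a direct application of Lemma~\ref{lem:intconv} via a single change of variables, using the homogeneity of the scaling function.

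First I would exploit that \eqref{eq:omegacond} makes $\angf$ positively homogeneous of degree $1-\beta$: for $\lambda>0$,
\[
  \angf(\lambda x) \;=\; \omega(\lambda x)\,\lambda x \;=\; \lambda^{-\beta}\omega(x)\,\lambda x \;=\; \lambda^{1-\beta}\angf(x).
\]
Since $\proj_V^\perp$ is linear, this yields $\|\proj_V^\perp \angf(tx)\|^2 = t^{2(1-\beta)}\|\proj_V^\perp \angf(x)\|^2$. Setting $f(x):=\|\proj_V^\perp \angf(x)\|^2$ and writing the risk as an integral against $P_t$, the rescaling identity becomes
\[
  t^{2(\beta-1)} R_t(V) \;=\; t^{2(\beta-1)} \int \|\proj_V^\perp \angf(x)\|^2\, P_t(dx) \;=\; \int f(x/t)\, P_t(dx),
\]
which is exactly the kind of expression handled by Lemma~\ref{lem:intconv}.

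Next I would check the hypotheses of that lemma for $f$. The bound $\|\angf(x)\|\le c_\omega\|x\|^{1-\beta}$ noted just after \eqref{eq:omegacond} gives $f(x)\le c_\omega^2\|x\|^{2(1-\beta)}$, and condition \eqref{eq:omegacond} guarantees $2(1-\beta)\in[0,\alpha)$, so choosing $\tilde\alpha=2(1-\beta)<\alpha$ settles the tail growth condition. The same bound, combined with $2(1-\beta)\ge 0$, gives local boundedness of $f$ on all of $\R^d$. For $P_\infty$-almost everywhere continuity one uses continuity of $\proj_V^\perp$ together with continuity of $\omega$ on $\R^d\setminus\{0\}$ (which is immediate for the natural homogeneous choices such as $\omega(x)=\|x\|^{-\beta}$ or its restriction to an orthant, and which is the only implicit regularity requirement here since $P_\infty$ puts no mass at the origin).

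Having verified the three conditions, Lemma~\ref{lem:intconv} directly yields
\[
  \lim_{t\to\infty} t^{2(\beta-1)} R_t(V) \;=\; \lim_{t\to\infty} \int f(x/t)\, P_t(dx) \;=\; \int f(x)\, P_\infty(dx) \;=\; R_\infty(V),
\]
which is the claim. There is no real obstacle: the whole argument is a homogeneity-plus-change-of-variables reduction, and the only nontrivial input (asymptotic uniform integrability of the $f(Y_t)$) is already absorbed in Lemma~\ref{lem:intconv} via the moment bound~\eqref{eq:mombound} obtained from Karamata's theorem.
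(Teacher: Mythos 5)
Your proof is correct and takes essentially the same approach as the paper: both reduce the claim to Lemma~\ref{lem:intconv} via the homogeneity identity $t^{2(\beta-1)}R_t(V)=\int f(x/t)\,P_t(dx)$ with $f(x)=\|\proj_V^\perp\angf(x)\|^2$ and the bound $f(x)\le c_\omega^2\|x\|^{2(1-\beta)}$, $2(1-\beta)<\alpha$. You are slightly more explicit than the paper's one-line proof in checking the local boundedness and $P_\infty$-a.e.\ continuity hypotheses of the lemma, but the substance is identical.
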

\begin{proof}
  Note that by the homogeneity of $\omega$,
  $$ t^{2(\beta-1)}R_t(V)= P_t(\|\proj_V^\perp t^{\beta-1}\angf\|^2) =\int f(x/t)\, P_t(dx)$$
  with $f(x):=\|\proj_V^\perp\angf(x)\|^2=\|\proj_V^\perp\omega(x)x\|^2\le c_\omega^2\|x\|^{2(1-\beta)}$. Since $2(1-\beta)<\alpha$, Lemma~\ref{lem:intconv} yields the assertion.
\end{proof}

In view of Corollary \ref{cor:Rtconverges}, one may ask whether a minimizer of $\tilde R_t:=t^{2(\beta-1)}R_t$ (which of course is also a minimizer of $R_t$) converges in some sense to a minimizer of $R_\infty$.
  Denote by $\mathcal{V}_p$ the set of all subspaces of $\rset^d$ of dimension $p$, endowed with the metric $\rho(V,W) = \vertiii{\proj_V - \proj_W} = \vertiii{\proj_V^\perp - \proj_W^\perp}=\sup_{x\in\sphere}\|\proj_V^\perp x- \proj_W^\perp x\|$, where $\vertiii{\point}$ denotes the operator norm.  

\begin{remark} \label{rem:hausdorffdistance}
  Note that $\rho(V,W)$ also gives an upper bound on the  Hausdorff distance between $V\cap \sphere$ and $W\cap\sphere$. To see this, let $x^*\in V\cap\sphere$ and $y^*\in W\cap\sphere$  be such that the Hausdorff distance equals $\inf_{y\in W\cap\sphere}\|x^*-y\| = \|x^*-y^*\|$. Then $y^*=\proj_W x^*/\|\proj_Wx^*\|$, $\|x^*-\proj_W x^*\|\le\rho(V,W)$ and $\|\proj_W x^*\|^2\ge 1-(\rho(V,W))^2$. Hence
  \begin{align*}
    \|x^*-y^*\|^2 & =  \|x^*-\proj_Wx^*\|^2 + \|\proj_W x^*-y^*\|^2\\
    & \le (\rho(V,W))^2+(1-\|\proj_Wx^*\|)^2\\
    & \le (\rho(V,W))^2+\Big(1-\sqrt{1-(\rho(V,W))^2}\Big)^2\\
    & = 2\Big(1-\sqrt{1-(\rho(V,W))^2}\Big).
  \end{align*}
\end{remark}

  \begin{theorem}\label{th:consistency_Vtstar}
  Suppose that $\omega$ satisfies condition \eqref{eq:omegacond} and that $R_\infty$ has a unique minimizer $V_\infty^*$ in $\VV_p$. Then for any minimizer $V_t^*$ of $R_t$ in $\VV_p$ one has
   \[
      \lim_{t\to\infty} \rho(V_t^*,V_\infty^*)=0.
   \]
  \end{theorem}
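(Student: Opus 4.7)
The plan is to establish uniform convergence of $\tilde R_t := t^{2(\beta-1)} R_t$ to $R_\infty$ on the Grassmannian $\VV_p$, and then to conclude by a standard argmin continuity argument, exploiting compactness of $\VV_p$ together with the uniqueness of $V_\infty^*$.

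First, $(\VV_p,\rho)$ is a compact metric space. Corollary~\ref{cor:Rtconverges} already provides pointwise convergence $\tilde R_t(V)\to R_\infty(V)$ for every $V\in\VV_p$. To upgrade this to uniform convergence, I would show that $V\mapsto\tilde R_t(V)$ is Lipschitz with a constant independent of $t$ for $t\ge t_0$. Using $|\|a\|^2-\|b\|^2|\le(\|a\|+\|b\|)\|a-b\|$, the inequality $\|\proj_V^\perp y-\proj_W^\perp y\|\le\rho(V,W)\|y\|$, and the homogeneity bound $\|\angf(x)\|^2\le c_\omega^2\|x\|^{2(1-\beta)}$ coming from \eqref{eq:omegacond}, one obtains
\[
|\tilde R_t(V)-\tilde R_t(W)|\;\le\;2c_\omega^2\,\rho(V,W)\int\|x/t\|^{2(1-\beta)}\,P_t(dx).
\]
Since $2(1-\beta)<\alpha$, the moment bound \eqref{eq:mombound} established in the proof of Lemma~\ref{lem:intconv} shows that the integral above is bounded uniformly for $t\ge t_0$. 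This yields equicontinuity of $\{\tilde R_t\}_{t\ge t_0}$ on the compact space $\VV_p$, which combined with pointwise convergence delivers the uniform convergence $\sup_{V\in\VV_p}|\tilde R_t(V)-R_\infty(V)|\to 0$. The same Lipschitz estimate applied under $P_\infty$ (which has a finite moment of order $2(1-\beta)$ by the same Karamata argument) ensures that $R_\infty$ is continuous on $\VV_p$.

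To conclude, I would argue by contradiction. Suppose $\rho(V_t^*,V_\infty^*)\not\to 0$. Then there exist $\eps>0$ and a sequence $t_n\to\infty$ with $\rho(V_{t_n}^*,V_\infty^*)\ge\eps$. By compactness of $\VV_p$ we may extract a subsequence along which $V_{t_{n_k}}^*\to\bar V$ for some $\bar V\in\VV_p$ satisfying $\rho(\bar V,V_\infty^*)\ge\eps$. By uniform convergence of $\tilde R_t$ and continuity of $R_\infty$,
\[
\tilde R_{t_{n_k}}(V_{t_{n_k}}^*)\;\longrightarrow\;R_\infty(\bar V),\qquad \tilde R_{t_{n_k}}(V_\infty^*)\;\longrightarrow\;R_\infty(V_\infty^*).
\]
Since $V_{t_{n_k}}^*$ minimizes $\tilde R_{t_{n_k}}$ over $\VV_p$, passing to the limit in $\tilde R_{t_{n_k}}(V_{t_{n_k}}^*)\le\tilde R_{t_{n_k}}(V_\infty^*)$ yields $R_\infty(\bar V)\le R_\infty(V_\infty^*)$, which contradicts the uniqueness of $V_\infty^*$.

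The main obstacle is the equicontinuity step, i.e., controlling the Lipschitz constants of $\tilde R_t$ uniformly in $t$; this is precisely where the constraint $\beta>1-\alpha/2$ in \eqref{eq:omegacond} is used, as it ensures via Karamata's theorem that $\|\angf\|^2$ has uniformly bounded expectation under $P_t(t\,\cdot\,)$, preventing the conditional risk landscape from degenerating at infinity.
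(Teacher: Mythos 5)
Your proof is correct and follows essentially the same approach as the paper's: both establish compactness of $\VV_p$, prove uniform Lipschitz bounds on $\tilde R_t$ (the paper's Lemma~\ref{lem:equicont}), invoke Corollary~\ref{cor:Rtconverges}, and conclude by extracting a convergent subsequence and deriving a contradiction with uniqueness. The only cosmetic difference is that you explicitly package equicontinuity plus pointwise convergence into a uniform-convergence statement before passing to the limit, whereas the paper applies the equicontinuity and pointwise estimates directly in a chain of inequalities at the relevant subsequence points; the underlying mechanism is identical.
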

  The following lemma plays a crucial role in the proof of Theorem \ref{th:consistency_Vtstar}.
  \begin{lemma} \label{lem:equicont}
    If $\omega$ satisfies condition \eqref{eq:omegacond}, then for sufficiently large $t_0$, the standardized risks $\tilde R_t=t^{2(\beta-1)}R_t$, $t\ge t_0$, are equicontinuous w.r.t.\  $\rho$.
  \end{lemma}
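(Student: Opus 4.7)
The plan is to establish a uniform Lipschitz bound for the family $\{\tilde R_t\}_{t \ge t_0}$ with respect to $\rho$, which is stronger than equicontinuity. The key observation is that by homogeneity of $\omega$ we can rewrite
$$\tilde R_t(V) \;=\; t^{2(\beta-1)} P_t\bigl(\|\proj_V^\perp \angf\|^2\bigr) \;=\; \int f_V(x/t)\, P_t(dx), \qquad f_V(y) := \|\proj_V^\perp \omega(y) y\|^2,$$
exactly as in the proof of Corollary~\ref{cor:Rtconverges}. So controlling $|\tilde R_t(V) - \tilde R_t(W)|$ reduces to controlling the pointwise difference $|f_V(y) - f_W(y)|$ after integrating against $P_t$.

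To bound the pointwise difference, I would apply the elementary identity $\|a\|^2 - \|b\|^2 = (a-b)^\top(a+b)$ with $a = \proj_V^\perp \omega(y)y$ and $b = \proj_W^\perp \omega(y)y$. Using $\|(\proj_V^\perp - \proj_W^\perp)z\| \le \rho(V,W)\|z\|$ and $\|(\proj_V^\perp + \proj_W^\perp)z\| \le 2\|z\|$ (both immediate from the definition of $\rho$ and the fact that projections are contractions), this yields
$$|f_V(y) - f_W(y)| \;\le\; 2 \rho(V,W)\, \|\omega(y)y\|^2.$$
By the scaling condition \eqref{eq:omegacond}, $\|\omega(y)y\|^2 \le c_\omega^2 \|y\|^{2(1-\beta)}$.

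Putting these together, $|\tilde R_t(V) - \tilde R_t(W)| \le 2 c_\omega^2 \rho(V,W) \cdot \esp_t(\|X/t\|^{2(1-\beta)})$. Since $2(1-\beta) < \alpha$ by hypothesis, the moment bound \eqref{eq:mombound} established inside the proof of Lemma~\ref{lem:intconv} shows that $\esp_t(\|X/t\|^{2(1-\beta)})$ is bounded by a constant depending only on $\alpha$ and $\beta$ for all $t \ge t_0$ with $t_0$ large enough. This yields a common Lipschitz constant $C := 4 c_\omega^2 (1-\beta)/(\alpha - 2(1-\beta))$ (with the degenerate case $\beta = 1$ giving $C = 0$), which implies equicontinuity.

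The only subtle point is ensuring the moment bound applies uniformly in $t$; that is precisely why Lemma~\ref{lem:intconv}'s argument (integration by parts plus Karamata) was stated for $t \ge t_0$, and I would simply invoke it. The rest is essentially a routine computation once the difference $f_V - f_W$ is linearized via the $a^2-b^2$ identity.
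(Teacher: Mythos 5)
Your proposal is correct and follows essentially the same route as the paper: both factor the squared-norm difference as a product of two terms, bound the difference term by $\rho(V,W)\,\|\angf(x)\|$, bound the sum term by $2\|\angf(x)\|$, and then invoke the uniform moment bound \eqref{eq:mombound} to get a common Lipschitz constant for $t\ge t_0$. (Minor slip: multiplying $2c_\omega^2$ by $\esp\|Y_t\|^{2(1-\beta)}\le 4(1-\beta)/(\alpha-2(1-\beta))$ gives the constant $8c_\omega^2(1-\beta)/(\alpha-2(1-\beta))$, not $4c_\omega^2(1-\beta)/(\alpha-2(1-\beta))$, but this has no bearing on equicontinuity.)
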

  \begin{proof}
    First note that $\big|\|\proj_V^\perp \angf(x)\|-\|\proj_W^\perp\angf(x)\|\big|\le \|\proj_V^\perp\angf(x)-\proj_W^\perp\angf(x)\| \le \|\angf(x)\|\rho(V,W) \le c_\omega\|x\|^{1-\beta} \rho(V,W)$. Choose $t_0$ as in the proof of Lemma \ref{lem:intconv} and recall the definition of $Y_t$ given there. Then, by \eqref{eq:mombound}, for all subspaces $V,W$ of $\R^d$
    \begin{align*}
       |\tilde R_t(V)-\tilde R_t(W)| & = t^{2(\beta-1)}\Big| P_t \|\proj_V^\perp \angf\|^2-P_t \|\proj_W^\perp \angf\|^2\Big| \\
       & \le t^{2(\beta-1)} P_t \Big(\big|\|\proj_V^\perp \angf\|- \|\proj_W^\perp \angf\|\big|\point\big(\|\proj_V^\perp\angf\|+\|\proj_W^\perp\angf\|\big)\Big) \\
       & \le 2 t^{2(\beta-1)}P_t\|\angf\|^2\rho(V,W)\\
       & \le 2c_\omega^2 \esp\|Y_t\|^{2(1-\beta)}\rho(V,W)  \\
       & \le 2 c_\omega^2 \frac{4(1-\beta)}{\alpha-2(1-\beta)} \rho(V,W)
    \end{align*}
    which proves the assertion.
\end{proof}

  \begin{proof}[Proof of Theorem \ref{th:consistency_Vtstar}]

  We first prove that $\VV_p$ is compact w.r.t.\ $\rho$. The assertion then follows by standard arguments using Lemma \ref{lem:equicont}.

     We have to show that any sequence $(V_n)_{n\in\N}$ in $\mathcal{V}_p$ has a convergent subsequence. For each $n$, let $(u_{1,n}, \ldots, u_{p,n})$  be an orthonormal basis for $V_n$ so that $\proj_{V_n} x = U_n U_n^\top x$ where $U_n$ denotes the matrix with columns $u_{j,n}$. The vectors $(u_{j,n})_{1\le j\le p}$ belong to the compact set $(\sphere)^p$. Thus there exists a subsequence $n_\ell$ such that $u_{j, n_\ell}\to u_j^0$ for all $1\le j\le p$. Since for all $n$, $\langle u_{j,n}, u_{i,n}\rangle = \delta_{i,j}$, we also have   $\langle u_{j}^0, u_{i}^0\rangle = \delta_{i,j}$ and the $u_{j}^0, j\le p$, form an orthonormal family in $\rset^d$.  Let $V^0$ be the  space generated by the $u_j^0$'s and denote by $U_0$ the matrix with these columns. Then $V^0$ has dimension $p$, i.e.\ $V^0\in\mathcal{V}_p$, and by construction
\[\rho(V_{n_\ell}, V^0)=\sup_{x\in\sphere}\|(U_{n_\ell}U_{n_\ell}^\top - U_0U_0^\top)x\| \to 0.\]
which proves the claimed compactness.

Now assume that the assertion of the theorem was wrong. By the compactness of $\VV_p$, then there exist a sequence $t_n\to\infty$ such that $V_{t_n}^*$ converges to some $V_\infty\ne V_\infty^*$. By Lemma \ref{lem:equicont}, $|\tilde R_{t_n}(V_{t_n}^*)-\tilde R_{t_n}(V_\infty)|\to 0$, and by
Corollary \ref{cor:Rtconverges} $|\tilde R_{t_n}(V_\infty)-R_\infty(V_\infty)|\to 0$ and $|\tilde R_{t_n}(V_\infty^*)-R_\infty(V_\infty^*)|\to 0$. Hence, for $\eps:=R_\infty(V_\infty)-R_\infty(V_\infty^*)$, that is strictly positive by assumption, and sufficiently large $n$, one may conclude a contradiction:
$$ R_\infty(V_\infty)\le \tilde R_{t_n}(V_\infty)+\frac\eps 4 \le \tilde R_{t_n}(V_{t_n}^*)+\frac\eps 2\le \tilde R_{t_n}(V_\infty^*)+\frac\eps 2\le R_\infty(V_\infty^*)+\frac{3\eps}4<R_\infty(V_\infty).
$$
Therefore, the assertion must be correct.
\end{proof}

Under Hypothesis 1, $V_0$ is the unique minimizer of $R_\infty$ over $\VV_p$, that is if we minimize the risk over linear subspaces with the correct dimension, as the following result shows. Hence in this case, $V_t^*$ converges to $V_0$.
\begin{lemma}\label{lem:V0Vstar}
  Under Hypothesis 1, for any subspace $V\subset \rset^d$ of arbitrary dimension one has
  \[
R_\infty(V) = 0 \iff  V_0 \subset V.
\]
Thus, $V_0$ is the unique minimizer of $R_\infty$ in $\VV_p$, whereas on $\VV_{\tilde p}$ with $\tilde p>p$ the points of minimum of the limit risk $R_\infty$ are not unique.
  \end{lemma}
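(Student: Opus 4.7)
The plan is to reduce the statement $R_\infty(V)=0 \iff V_0 \subset V$ to a statement about the support of $\mu$, then deduce the uniqueness/non-uniqueness claims as an immediate consequence.

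First, I would unpack the definition of the limit risk. Since $R_\infty(V) = P_\infty(\|\proj_V^\perp \angf\|^2)$ is the integral of a nonnegative function, $R_\infty(V)=0$ is equivalent to $\proj_V^\perp \angf(x) = 0$ for $P_\infty$-a.e.\ $x$. Because $\omega$ is strictly positive and $\angf(x)=\omega(x)x$, this is equivalent to $x\in V$ for $P_\infty$-a.e.\ $x$, and since $P_\infty$ is proportional to $\mu$ on $\{\|x\|>1\}$, to $\mu(\{\|x\|>1\}\setminus V)=0$.

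The main (but short) obstacle is to upgrade this $\mu$-null statement on $\{\|x\|>1\}$ to a statement about $\supp\mu$ in $\R^d\setminus\{0\}$. Here the key tool is the $-\alpha$-homogeneity of $\mu$ combined with the fact that $V$ is a linear subspace, so that $rV=V$ for all $r>0$. For any $r>0$, the set $\{\|x\|>r\}\setminus V$ equals $r\cdot(\{\|x\|>1\}\setminus V)$, hence has $\mu$-measure $r^{-\alpha}\mu(\{\|x\|>1\}\setminus V)=0$. Taking the countable union over $r=1/n$ yields $\mu((\R^d\setminus\{0\})\setminus V)=0$, and since $V$ is closed this forces $\supp\mu\subset V$ and therefore $V_0=\Span(\supp\mu)\subset V$. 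The converse direction is immediate: if $V_0\subset V$, then $\supp\mu\subset V$, so $\angf(x)\in V$ for $\mu$-a.e.\ $x$, whence $P_\infty(\|\proj_V^\perp\angf\|^2)=0$.

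Finally I would derive the two uniqueness assertions. Under Hypothesis 1, $\dim V_0=p$, so the equivalence just proved shows that the minimizers of $R_\infty$ over $\VV_p$ are exactly the $p$-dimensional subspaces $V$ containing $V_0$; by a dimension count there is only one such subspace, namely $V_0$ itself, which thus minimizes $R_\infty$ with value $0$. For $\tilde p>p$, the minimizers of $R_\infty$ over $\VV_{\tilde p}$ are the $\tilde p$-dimensional subspaces containing $V_0$, and since $d\geq \tilde p>p=\dim V_0$ one can choose any $\tilde p-p$ linearly independent vectors in $V_0^\perp$ and adjoin them to a basis of $V_0$, giving a continuum of distinct minimizers. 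This yields the stated non-uniqueness.
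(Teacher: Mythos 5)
Your proof is correct and follows essentially the same route as the paper's: unwind the $\iff$ to the statement that $P_\infty$ concentrates on $V$, then use the $-\alpha$-homogeneity of $\mu$ (together with the linearity of $V$) to pass from a null set on $\{\|x\|>1\}$ to $\supp\mu\subset V$, and conclude. The only difference is one of exposition — you spell out the homogeneity/scaling step and the dimension-count for the uniqueness claims, which the paper leaves terse — so there is nothing substantively new here.
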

  \begin{proof}

    If $ V_0\subset V$ then $V^\perp \subset V_0^\perp $. By Hypothesis 1, $P_\infty$ is concentrated on $V_0$, which implies
    $R_\infty(V)=P_\infty\|\proj_V^\perp {\angf}\|^2 = 0$.

    Conversely, if $R_\infty(V)=0$, then $1=P_\infty\{\proj_V^\perp \angf=0\}=P_\infty(V)$. By definition of $P_\infty$ and the homogeneity of $\mu$, this means that the support of $\mu$ must be a subset of $V$ and thus $V_0\subset V$.
\end{proof}

In the remaining part of this section, we will establish analogous consistency results for the empirical risk $R_{n,k}$ and its minimizer. In what follows, let $F_{\|X\|}$ be the \cdf of $\|X\|$, $F^\leftarrow_{\|X\|}$ its generalized inverse (quantile function) and define
$$ t_{n,k} := F^\leftarrow_{\|X\|}(1-k/n). $$
We start with consistency of the standardized empirical risk.
\begin{proposition} \label{prop:empriskconsist}
  If $\omega$ satisfies condition \eqref{eq:omegacond}, then $t_{n,k}^{2(\beta-1)}R_{n,k}(V)\to R_\infty(V)$ in probability for all linear subspaces $V$ of $\R^d$.
\end{proposition}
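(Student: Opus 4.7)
The plan is to reduce to a deterministic threshold and then argue via truncation. By the homogeneity of $\omega$ in \eqref{eq:omegacond} one has $\angf(\lambda x)=\lambda^{1-\beta}\angf(x)$, hence
\[
t_{n,k}^{2(\beta-1)}R_{n,k}(V)=\frac 1 k\sum_{i=1}^n W_i\,\un\{\|X_i\|>\hat t_{n,k}\},\qquad W_i:=\|\proj_V^\perp\angf(X_i/t_{n,k})\|^2,
\]
with $W_i\le c_\omega^2(\|X_i\|/t_{n,k})^{2(1-\beta)}$ by \eqref{eq:omegacond}. Introduce the deterministic-threshold analog $S_n:=k^{-1}\sum_{i=1}^n W_i\un\{\|X_i\|>t_{n,k}\}$. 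Working in the implicit regime $k=k_n\to\infty$, $k/n\to 0$, it suffices to prove (a) $|t_{n,k}^{2(\beta-1)}R_{n,k}(V)-S_n|\to 0$ in probability and (b) $S_n\to R_\infty(V)$ in probability.

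For (a) I would use the classical fact that $\hat t_{n,k}/t_{n,k}\to 1$ in probability for intermediate order statistics of a regularly varying norm. For any $\delta\in(0,1)$, on $\{|\hat t_{n,k}/t_{n,k}-1|\le\delta\}$ the two indicators above differ only for indices with $\|X_i\|$ in the shell $((1-\delta)t_{n,k},(1+\delta)t_{n,k}]$, so the difference is bounded by $S_n^{1-\delta}-S_n^{1+\delta}$ with $S_n^{\gamma}:=k^{-1}\sum_i W_i\un\{\|X_i\|>\gamma t_{n,k}\}$. Regular variation of $\bar F_{\|X\|}$ gives $n\bar F(\gamma t_{n,k})/k\to\gamma^{-\alpha}$, and the homogeneity identity $W_i=\gamma^{2(1-\beta)}\|\proj_V^\perp\angf(X_i/(\gamma t_{n,k}))\|^2$ together with Corollary \ref{cor:Rtconverges} applied at threshold $\gamma t_{n,k}$ yields $\esp S_n^{\gamma}\to\gamma^{2(1-\beta)-\alpha}R_\infty(V)$. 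This limit is continuous at $\gamma=1$, so Markov's inequality followed by $\delta\to 0$ gives (a).

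For (b) the mean is immediate: $\esp S_n=(n\bar F(t_{n,k})/k)\,\tilde R_{t_{n,k}}(V)\to R_\infty(V)$ by Corollary \ref{cor:Rtconverges}. The obstacle is concentration, since condition \eqref{eq:omegacond} only guarantees $2(1-\beta)<\alpha$ and the $W_i$ may fail to have a finite second moment; this is the main difficulty. I would circumvent it by truncation: write $S_n=A_n^M+B_n^M$ with $A_n^M:=k^{-1}\sum_i(W_i\wedge M)\un\{\|X_i\|>t_{n,k}\}$ and $B_n^M:=S_n-A_n^M$. The crude variance bound $\var A_n^M\le nM^2\bar F(t_{n,k})/k^2=O(M^2/k)$ combined with Chebyshev yields $A_n^M-\esp A_n^M\to 0$ in probability for each fixed $M$, while Lemma \ref{lem:intconv} applied to the bounded $P_\infty$-a.e.\ continuous function $\|\proj_V^\perp\angf(\cdot)\|^2\wedge M$ gives $\esp A_n^M\to P_\infty(\|\proj_V^\perp\angf\|^2\wedge M)$. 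For the tail, Lemma \ref{lem:intconv} applied to $f(x)=(\|\proj_V^\perp\angf(x)\|^2-M)_+$, which is dominated by $c_\omega^2\|x\|^{2(1-\beta)}$ with $2(1-\beta)<\alpha$, yields $\esp B_n^M\to P_\infty((\|\proj_V^\perp\angf\|^2-M)_+)$, and the latter tends to $0$ as $M\to\infty$ by monotone convergence since $R_\infty(V)<\infty$. A standard $\eps$--$M$ argument (choose $M$ large first so that both the deterministic gap $|P_\infty(\|\proj_V^\perp\angf\|^2\wedge M)-R_\infty(V)|$ and the limiting $\esp B_n^M$ are arbitrarily small, then let $n\to\infty$, applying Chebyshev on $A_n^M$ and Markov on $B_n^M$) concludes (b) and hence the proposition.
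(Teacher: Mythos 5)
Your proposal is correct, and it takes a genuinely different route in both of its parts. For the passage from the random threshold $\hat t_{n,k}$ to $t_{n,k}$, the paper applies H\"older's inequality to the sum $\frac 1k\sum_i W_i\big|\un\{\|X_i\|>\hat t_{n,k}\}-\un\{\|X_i\|>t_{n,k}\}\big|$, splitting it into a moment factor (controlled via \eqref{eq:mombound}) and a counting factor (controlled via the CLT for the binomial), which yields an explicit rate $O_P\big(k^{-\eta/(2(2+\eta))}\big)$; you instead bound the difference by the shell sum $S_n^{1-\delta}-S_n^{1+\delta}$ and exploit continuity of $\gamma\mapsto\gamma^{2(1-\beta)-\alpha}R_\infty(V)$ at $\gamma=1$, which gives only $o_P(1)$ but by a more elementary argument. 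For the concentration step, the paper truncates in the $\|X\|$-variable at the $n$-dependent level $d_{n,k}=(\log k)t_{n,k}$, bounding the body via a Karamata-based second-moment estimate and the tail via a first-moment estimate, all within a single $n\to\infty$ limit; you truncate the summands $W_i$ at a fixed level $M$ in their value space, so $A_n^M$ is bounded and Chebyshev gives an $O(M^2/k)$ variance bound, while $\esp B_n^M\to P_\infty\big((f-M)_+\big)$ can be made small by monotone convergence, and the conclusion follows by a standard $\eps$--$M$ diagonal argument (first $n\to\infty$, then $M\to\infty$). Your route is closer to the textbook uniform-integrability paradigm and cleanly isolates Lemma~\ref{lem:intconv} as the sole analytic input after truncation, at the cost of a double limit; the paper's route is self-contained within one limit and delivers explicit rates, at the cost of heavier Karamata computations. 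One small implicit assumption shared by both proofs (and already used in Corollary~\ref{cor:Rtconverges}) is that $x\mapsto\|\proj_V^\perp\angf(x)\|^2$ is $P_\infty$-a.e.\ continuous; you should state it when invoking Lemma~\ref{lem:intconv} for the truncated functions $f\wedge M$ and $(f-M)_+$, which inherit this property.
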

\begin{proof} For simplicity, we assume that $F_{\|X\|}$ is continuous in the tail (so that there are no ties among the observed norms), but the proof can be easily generalized using standard techniques from the theory of regular varying functions.
  First we want to replace the random threshold $\hat t_{n,k}$ with $t_{n,k}$  in the definition of $R_{n,k}$. Observe that by the H\"{o}lder inequality
  \begin{align*}
    t_{n,k}^{2(\beta-1)}& \Big|R_{n,k}(V)-\frac 1k\sum_{i=1}^n \|\proj_V^\perp\ang_i\|^2\un\{\|X_i\|>t_{n,k}\}\Big|\\
    & \le \frac 1k t_{n,k}^{2(\beta-1)} \sum_{i=1}^n \|\proj_V^\perp\ang_i\|^2 \big|\un\{\|X_i\|>\hat t_{n,k}\}- \un\{\|X_i\|>t_{n,k}\}\big|\\
    & \le \Big[ \frac 1k \sum_{i=1}^n t_{n,k}^{(2+\eta)(\beta-1)} \|\ang_i\|^{2+\eta} \un\{\|X_i\|>t_{n,k}\wedge \hat t_{n,k}\}\Big]^{2/(2+\eta)}\\
     & \quad \point  \Big[\frac 1k \sum_{i=1}^n \big|\un\{\|X_i\|>\hat t_{n,k}\}- \un\{\|X_i\|>t_{n,k}\}\big|^{(2+\eta)/\eta}\Big]^{\eta/(2+\eta)}.
  \end{align*}
  where $\eta>0$ is chosen such that $(2+\eta)(1-\beta)<\alpha$.

  It is well known that $\hat t_{n,k}/t_{n,k}\to 1$ in probability. Thus there exists a sequence $\delta_n\downarrow 0$ such that $P\{\hat t_{n,k}>t_{n,k}(1-\delta_n)\}\to 0$. By \eqref{eq:mombound} and the regular variation of $1-F_{\|X\|}$
  \begin{align*}
  \esp& \Big(t_{n,k}^{(2+\eta)(\beta-1)}\|\ang\|^{2+\eta}\un\{\|X_i\|>t_{n,k}(1-\delta_n)\}\Big)\\
   & \le c_\omega^{2+\eta} \esp \|Y_{t_{n,k}(1-\delta_n)}\|^{(2+\eta)(1-\beta)} (1-F_{\|X\|}(t_{n,k}(1-\delta_n))) \\
   & = O(1-F_{\|X\|}(t_{n,k})) = O(k/n).
  \end{align*}
  In particular, $k^{-1} \sum_{i=1}^n t_{n,k}^{(2+\eta)(\beta-1)} \|\ang_i\|^{2+\eta} \un\{\|X_i\|>t_{n,k}\wedge \hat t_{n,k}\}$ is stochastically bounded.

  Furthermore,
  $$ \sum_{i=1}^n \big|\un\{\|X_i\|>\hat t_{n,k}\}- \un\{\|X_i\|>t_{n,k}\}\big|^{(2+\eta)/\eta} = \Big|\sum_{i=1}^n \un\{\|X_i\|>t_{n,k}\}-k\Big|,
  $$
  because there exist exactly $k$ exceedances of $\hat t_{n,k}$, and either all non-vanishing differences of the indicator functions equal 1 or all equal $-1$, depending on whether $\hat t_{n,k}<t_{n,k}$ or $\hat t_{n,k}>t_{n,k}$. Now, the last sum is binomially distributed with parameters $n$ and $k/n$. By the central limit theorem for triangular arrays, the right hand side is of the stochastic order $k^{1/2}$.

  A combination of these results show that
  \begin{equation}  \label{eq:replacerandthresh}
     t_{n,k}^{2(\beta-1)} \Big|R_{n,k}(V)-\frac 1k\sum_{i=1}^n \|\proj_V^\perp\ang_i\|^2\un\{\|X_i\|>t_{n,k}\}\Big| = O_P\big(  k^{-\eta/(2(2+\eta))}\big) = o_P(1)
  \end{equation}
  uniformly for all subspaces $V$.

  In view of Corollary \ref{cor:Rtconverges}, it thus suffices to show that
  \begin{align*}
    t_{n,k}^{2(\beta-1)}& \Big| \frac 1k \sum_{i=1}^n \|\proj_V^\perp\ang_i\|^2 \un\{\|X_i\|>t_{n,k}\} - R_{t_{n,k}}(V)\Big| \\
     \le & t_{n,k}^{2(\beta-1)} \Big| \frac 1k\sum_{i=1}^n \Big( \|\proj_V^\perp\ang_i\|^2 \un\{\|X_i\|\in (t_{n,k},d_{n,k}]\} - \esp\big(\|\proj_V^\perp\ang_i\|^2 \un\{\|X_i\|\in (t_{n,k},d_{n,k}]\}\big)\Big)\Big|\\
    & \quad + t_{n,k}^{2(\beta-1)} \Big| \frac 1k \sum_{i=1}^n \Big(\|\proj_V^\perp\ang_i\|^2 \un\{\|X_i\|>d_{n,k}\} - \esp\big(\|\proj_V^\perp\ang_i\|^2 \un\{\|X_i\|>d_{n,k}\}\big)\Big)\Big|\\
     =: & T_{n,1}+T_{n,2} \to 0
  \end{align*}
  in probability, with $d_{n,k}:= (\log k) t_{n,k}$.

  Let $\alpha^*:= 4(1-\beta)\vee (\alpha+1)$. By similar calculation as in the proof of Corollary \ref{cor:Rtconverges}
  \begin{align*}
    \esp(T_{n,1}^2) & = \var(T_{n,1})\\
     & \le  \frac n{k^2} t_{n,k}^{4(\beta-1)} \var\big(\|\proj_V^\perp\ang\|^2 \un\{\|X\|\in (t_{n,k},d_{n,k}]\}\big)\\
     & \le \frac n{k^2} t_{n,k}^{4(\beta-1)}c_\omega^4 \esp\big( \|X\|^{4(1-\beta)}\un\{\|X\|\in (t_{n,k},d_{n,k}]\}\big)\\
     & \le \frac n{k^2} t_{n,k}^{-\alpha^*}c_\omega^4 \esp\big( \|X\|^{\alpha^*}\un\{\|X\|\in (t_{n,k},d_{n,k}]\}\big).
  \end{align*}
  Similarly as in the proof of Lemma \ref{lem:intconv}, we can bound the expectation using integration by parts and Karamata's theorem:
  \begin{align*}
    \esp\big(& \|X\|^{\alpha^*}\un\{\|X\|\in (t_{n,k},d_{n,k}]\}\big)\\
    & \le \int_0^{d_{n,k}} z^{\alpha^*}\, \PP^{\|X\|}(dz) \\
    & = -d_{n,k}^{\alpha^*} (1-F_{\|X\|}(d_{n,k})) + \alpha^* \int_0^{d_{n,k}} z^{\alpha^*-1}(1-F_{\|X\|}(z))\, dz\\
    & = d_{n,k}^{\alpha^*} (1-F_{\|X\|}(d_{n,k}))\Big( \frac{\alpha^*}{\alpha^*-\alpha}-1 +o(1)\Big)\\
    & \le \frac{2\alpha}{\alpha^*-\alpha}d_{n,k}^{\alpha^*} (1-F_{\|X\|}(d_{n,k}))
  \end{align*}
  for sufficiently large $n$.
  Therefore, by the choice of $d_{n,k}$
  $$ \esp(T_{n,1}^2)=O\Big( \frac n{k^2} (\log k)^{\alpha^*}(1-F_{\|X\|}(d_{n,k}))\Big) =o\Big( \frac{(\log k)^{\alpha^*}}k\Big) = o(1),
  $$
  which implies the convergence in probability of $T_{n,1}$.

  For the second term, we may conclude from \eqref{eq:mombound} and the definition of $t_{n,k}$ that
  \begin{align*}
    \esp(T_{n,2}) & \le \frac nk t_{n,k}^{2(\beta-1)} 2 \esp\big(\|\proj_V^\perp\ang\|^2\un\{\|X\|>d_{n,k}\}\big)\\
    & \le 2 c_\omega^2\frac nk \Big(\frac{d_{n,k}}{t_{n,k}}\Big)^{2(1-\beta)} \esp\|Y_{d_{n,k}}\|^{2(1-\beta)} (1-F_{\|X\|}(d_{n,k}))\\
    & \le \frac{8(1-\beta)c_\omega^2}{\alpha-2(1-\beta)} \point \frac{d_{n,k}^{2(1-\beta)}(1-F_{\|X\|}(d_{n,k}))}{t_{n,k}^{2(1-\beta)}(1-F_{\|X\|}(t_{n,k}))}.
  \end{align*}
  Because $t\mapsto t^{2(1-\beta)}(1-F_{\|X\|}(t))$ is regularly varying with index $2(1-\beta)-\alpha<0$ and $t_{n,k}=o(d_{n,k})$, the right hand side tends to 0. This proves that $T_{n,2}$ converges to 0 in probability, which  concludes the proof.
\end{proof}

The following result is an analog to Lemma \ref{lem:equicont}.
\begin{lemma}  \label{lem:Rnkequicont}
  If $\omega$ satisfies condition \eqref{eq:omegacond}, then for all $\eps>0$ there exists $\delta>0$ such that for sufficiently large $n$
  $$ P\Big\{\sup_{V,W:\rho(V,W)\le\delta} t_{n,k}^{2(\beta-1)}|R_{n,k}(V)-R_{n,k}(W)|>\eps\Big\}<\eps.
  $$
\end{lemma}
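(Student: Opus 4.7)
My plan is to mimic the proof of Lemma \ref{lem:equicont}, with the main complication being the random threshold $\hat t_{n,k}$ inside the empirical risk. First I would use the pointwise identity $a^2-b^2=(a-b)(a+b)$ together with the inequality $\bigl|\|\proj_V^\perp y\|-\|\proj_W^\perp y\|\bigr|\le \rho(V,W)\|y\|$ used at the beginning of the proof of Lemma \ref{lem:equicont}, which yields
\[
\bigl|\|\proj_V^\perp y\|^2-\|\proj_W^\perp y\|^2\bigr|\le 2\|y\|^2\,\rho(V,W).
\]
Applying this with $y=\ang_{i,\hat t_{n,k}}=\ang_i\un\{\|X_i\|>\hat t_{n,k}\}$ and using $\|\ang_i\|\le c_\omega\|X_i\|^{1-\beta}$ reduces the problem to showing the stochastic boundedness (uniformly in large $n$) of
\[
S_n:=t_{n,k}^{2(\beta-1)}\,\frac{1}{k}\sum_{i=1}^n\|\ang_i\|^2\un\{\|X_i\|>\hat t_{n,k}\}
\le c_\omega^2\, t_{n,k}^{2(\beta-1)}\,\frac{1}{k}\sum_{i=1}^n\|X_i\|^{2(1-\beta)}\un\{\|X_i\|>\hat t_{n,k}\}.
\]

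Next I would get rid of the random threshold. Since $\hat t_{n,k}/t_{n,k}\to 1$ in probability (used already in the proof of Proposition \ref{prop:empriskconsist}), for every $\eta>0$ one can fix $\delta_0\in(0,1)$ such that $P\{\hat t_{n,k}>(1-\delta_0)t_{n,k}\}\ge 1-\eta$ for all sufficiently large $n$. On this event, $S_n$ is dominated by the deterministically thresholded sum
\[
\tilde S_n:=c_\omega^2\, t_{n,k}^{2(\beta-1)}\,\frac{1}{k}\sum_{i=1}^n\|X_i\|^{2(1-\beta)}\un\{\|X_i\|>(1-\delta_0)t_{n,k}\}.
\]
Using the definition $n(1-F_{\|X\|}(t_{n,k}))\sim k$, the regular variation of $1-F_{\|X\|}$, and the moment bound \eqref{eq:mombound} with $\tau=2(1-\beta)<\alpha$, the expectation of $\tilde S_n$ is asymptotic to $c_\omega^2(1-\delta_0)^{2(1-\beta)-\alpha}\cdot 4(1-\beta)/(\alpha-2(1-\beta))$, hence bounded by some constant $C_0$ independent of $n$.

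Combining these two steps via Markov's inequality, for any $\eta>0$ one finds $M>0$ with $P(S_n>M)<2\eta$ for all sufficiently large $n$. Given $\eps>0$, set $\eta:=\eps/2$ and choose $\delta:=\eps/(4 c_\omega^2 M)$. Then on the event $\{S_n\le M\}$, the pointwise bound above gives, uniformly over $V,W$ with $\rho(V,W)\le\delta$,
\[
t_{n,k}^{2(\beta-1)}|R_{n,k}(V)-R_{n,k}(W)|\le 2 c_\omega^2\,\rho(V,W)\,S_n\le 2 c_\omega^2\,\delta\,M\le \eps,
\]
so the complementary probability is at most $P(S_n>M)<\eps$ for large $n$, which is the claim. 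The main obstacle is thus the handling of $\hat t_{n,k}$; once this is absorbed into a high probability event, the remaining work is the same computation of asymptotic moments (via Karamata and \eqref{eq:mombound}) that already underlies Lemma \ref{lem:equicont}.
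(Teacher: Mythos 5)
Your argument is correct and rests on the same core ingredients as the paper's proof: the pointwise bound $\bigl|\|\proj_V^\perp y\|^2-\|\proj_W^\perp y\|^2\bigr|\le 2\rho(V,W)\|y\|^2$, the moment estimate \eqref{eq:mombound} with Karamata's theorem, and Markov's inequality. The one genuine difference is how the random threshold $\hat t_{n,k}$ is handled. The paper simply invokes the uniform $o_P(1)$ bound \eqref{eq:replacerandthresh}, already established in the proof of Proposition~\ref{prop:empriskconsist}, to pass from $R_{n,k}$ to its deterministic-threshold analogue before applying Markov. You instead dominate the indicator $\un\{\|X_i\|>\hat t_{n,k}\}$ by $\un\{\|X_i\|>(1-\delta_0)t_{n,k}\}$ on the high-probability event $\{\hat t_{n,k}>(1-\delta_0)t_{n,k}\}$, which needs only the weaker fact that $\hat t_{n,k}/t_{n,k}\to 1$ in probability and makes the argument more self-contained; the paper's route recycles an existing estimate. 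Both are valid. One small notational slip in your final display: since $S_n$ is defined with $\|\ang_i\|^2$ (the $c_\omega^2$ only appears in the further bound $S_n\le\tilde S_n$), the pointwise estimate gives $t_{n,k}^{2(\beta-1)}|R_{n,k}(V)-R_{n,k}(W)|\le 2\rho(V,W)S_n$ with no extra factor $c_\omega^2$; this only affects the explicit choice of $\delta$, not the validity of the proof.
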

\begin{proof}
  First note that in view of \eqref{eq:replacerandthresh}, it suffices to prove the assertion with $R_{n,k}(V)$ replaced by $k^{-1} \sum_{i=1}^n \|\proj_V^\perp\ang_i\|^2 \un\{\|X_i\|>t_{n,k}\}$ and $R_{n,k}(W)$ replaced by the analogous expression.

  Similarly as in the proof of Lemma \ref{lem:equicont}, we have
  \begin{align*}
    \frac 1k& \sum_{i=1}^n \big(\|\proj_V^\perp\ang_i\|^2-\|\proj_W^\perp\ang_i\|^2\big)\un\{\|X_i\|>t_{n,k}\}\\
    & \le \frac 2k \sum_{i=1}^n \|\proj_V^\perp\ang_i-\proj_W^\perp\ang_i\|\point\|\ang_i\|\un\{\|X_i\|>t_{n,k}\}\\
    & \le \frac 2k \rho(V,W) \sum_{i=1}^n \|\ang_i\|^2 \un\{\|X_i\|>t_{n,k}\}\\
    & \le \frac 2k \rho(V,W) c_\omega^2 \sum_{i=1}^n \|X_i\|^{2(1-\beta)} \un\{\|X_i\|>t_{n,k}\}
  \end{align*}
  for $n$ sufficiently large.
  Hence, by Markov's inequality, \eqref{eq:mombound} and the definition of $t_{n,k}$,
  \begin{align*}
    \PP\Big\{ & \sup_{V,W:\rho(V,W)\le\delta}\frac 1k t_{n,k}^{2(\beta-1)}\sum_{i=1}^n \big(\|\proj_V^\perp\ang_i\|^2-\|\proj_W^\perp\ang_i\|^2\big)\un\{\|X_i\|>t_{n,k}\}>\eps\Big\}\\
    & \le \frac{2c_\omega^2\delta n}{\eps k} E\Big(\Big(\frac{\|X\|}{t_{n,k}}\Big)^{2(1-\beta)}
  \un\{\|X\|>t_{n,k}\}\Big)\\
    & \le \frac{8(1-\beta)c_\omega^2\delta }{\eps(\alpha-2(1-\beta))}\\
    & \le \eps
  \end{align*}
  for $\delta:= \eps^2(\alpha-2(1-\beta))/\big(8(1-\beta)c_\omega^2\big)$.
\end{proof}

We are now ready to prove weak consistency of the empirical risk minimizer.
\begin{theorem} \label{th:consistency_hatVn}
  If $\omega$ satisfies condition \eqref{eq:omegacond} and $R_\infty$ has a unique minimizer $V_\infty^*$ in $\VV_p$, then for all minimizers $\hat V_n$ of $R_{n,k}$ in $\VV_p$ one has $\rho(\hat V_n,V_\infty^*)\to 0$ in probability.
\end{theorem}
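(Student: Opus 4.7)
The plan is to mirror the contradiction argument of Theorem~\ref{th:consistency_Vtstar}, replacing the deterministic convergence of $\tilde R_t$ by a uniform convergence in probability of the standardized empirical risks $\tilde R_{n,k}:=t_{n,k}^{2(\beta-1)}R_{n,k}$ to $R_\infty$ on the compact metric space $(\VV_p,\rho)$. Two previously established ingredients will be combined: the pointwise consistency of Proposition~\ref{prop:empriskconsist} and the stochastic equicontinuity of Lemma~\ref{lem:Rnkequicont}.

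First, I would upgrade pointwise convergence to uniform convergence. Fix $\eps>0$ and use Lemma~\ref{lem:Rnkequicont} to pick $\delta>0$ such that $\PP\{\sup_{\rho(V,W)\le\delta}|\tilde R_{n,k}(V)-\tilde R_{n,k}(W)|>\eps\}<\eps$ for all large $n$. Compactness of $(\VV_p,\rho)$, established within the proof of Theorem~\ref{th:consistency_Vtstar}, provides a finite $\delta$-net $\{V_1,\dots,V_M\}$. Proposition~\ref{prop:empriskconsist} applied at each $V_j$ yields $\tilde R_{n,k}(V_j)\to R_\infty(V_j)$ in probability. Passing to the limit $t\to\infty$ in the Lipschitz bound obtained inside the proof of Lemma~\ref{lem:equicont} (whose constant does not depend on $t$) shows that $R_\infty$ is itself Lipschitz on $(\VV_p,\rho)$. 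A routine triangle-inequality chaining of these three facts over the $\delta$-net yields $\sup_{V\in\VV_p}|\tilde R_{n,k}(V)-R_\infty(V)|\to 0$ in probability.

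Next, I would convert the uniform convergence and the uniqueness of $V_\infty^*$ into the stated consistency via a classical argmin argument. Fix $\eta>0$ and set $B_\eta:=\{V\in\VV_p:\rho(V,V_\infty^*)\ge\eta\}$; this set is closed in the compact space $(\VV_p,\rho)$, hence compact. Since $R_\infty$ is continuous with a unique minimizer at $V_\infty^*$, the gap $2\gamma:=\inf_{V\in B_\eta}R_\infty(V)-R_\infty(V_\infty^*)$ is strictly positive. On the event $\Omega_n:=\{\sup_V|\tilde R_{n,k}(V)-R_\infty(V)|<\gamma/2\}$, which has probability tending to one, the optimality relation $\tilde R_{n,k}(\hat V_n)\le\tilde R_{n,k}(V_\infty^*)$ forces $R_\infty(\hat V_n)\le R_\infty(V_\infty^*)+\gamma<\inf_{V\in B_\eta}R_\infty(V)$, so $\hat V_n\notin B_\eta$. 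This yields $\PP\{\rho(\hat V_n,V_\infty^*)\ge\eta\}\to 0$, as required.

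The main obstacle is the first step: upgrading the pointwise convergence in probability of $\tilde R_{n,k}(V)$ to a uniform convergence in probability over $\VV_p$. This is precisely where stochastic equicontinuity (Lemma~\ref{lem:Rnkequicont}) must be combined with compactness of the index set. Once the uniform law is in hand, the closing argmin argument essentially duplicates the final display of the proof of Theorem~\ref{th:consistency_Vtstar}.
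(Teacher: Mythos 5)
Your proof is correct and follows essentially the same route as the paper's: you combine Proposition~\ref{prop:empriskconsist}, the stochastic equicontinuity of Lemma~\ref{lem:Rnkequicont}, compactness of $\VV_p$ via a finite $\delta$-net, Lipschitz continuity of $R_\infty$, and uniqueness of $V_\infty^*$, exactly as the paper does. The only difference is cosmetic: you explicitly isolate the uniform-in-$V$ convergence in probability as an intermediate statement before invoking the standard argmin-consistency argument, whereas the paper chains the inequalities directly without naming that intermediate fact.
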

\begin{proof}
  Let $\tilde R_{n,k} := t_{n,k}^{2(\beta-1)}R_{n,k}$.
  Fix an arbitrary $\eps>0$ and let $\mathcal{M} := \{W\in\VV_p\mid \rho(V_\infty^*,W)\ge\eps/2\}$.
  By the arguments used in the proof of Lemma \ref{lem:equicont}, it is easily seen that $R_\infty$ is (Lipschitz) continuous w.r.t.\ $\rho$. Moreover, $\VV_p$ is compact (see proof of Theorem \ref{th:consistency_Vtstar}), and so is $\mathcal{M}$. Hence, $\eta:=\inf_{W\in\mathcal{M}} R_\infty(W)-R_\infty(V_\infty^*)>0$, since the infimum is attained and $V_\infty^*$ is the unique minimizer of $R_\infty$.

  According to Lemma \ref{lem:Rnkequicont}, there exists $\delta\le\eps/2$ and $n_0$ such that for all $n\ge n_0$ with probability greater than $1-\eps/4$ one has
  $$|\tilde R_{n,k}(V)-\tilde R_{n,k}(W)|\le\eta/4$$
  for all $V,W\in\VV_p$ such that
  $\rho(V,W)\le\delta$. Since $\VV_p$ is compact, there exists a finite cover of $\VV_p$ by open balls with radius $\delta$ and centers $W_1,\ldots, W_m$, say. By Proposition \ref{prop:empriskconsist}, there exists $n_1\ge n_0$ such that with probability greater than $1-\eps/2$
  \begin{align*}
    & |\tilde R_{n,k}(W_j)-R_\infty(W_j)|\le \eta/4, \quad \forall\, 1\le j\le m,\\
    & |\tilde R_{n,k}(V_\infty^*)-R_\infty(V_\infty^*)|\le \eta/4.
  \end{align*}
  Hence, on a set with probability greater than $1-\eps$, there exists $j\in\{1,\ldots,m\}$ such that $\rho(\hat V_n,W_j)<\delta\le\eps/2$ and
  $$ R_\infty(W_j)\le \tilde R_{n,k}(W_j)+\frac\eta 4\le \tilde R_{n,k}(\hat V_n)+\frac\eta 2\le \tilde R_{n,k}(V_\infty^*)+\frac\eta 2\le R_\infty(V_\infty^*)+\frac{3\eta}4.
  $$
  By the definition of $\eta$, this implies $W_j\not\in\mathcal{M}$ and thus
  $$ \rho(\hat V_n,V_\infty)\le \rho(\hat V_n,W_j)+\rho(W_j,V_\infty^*)<\eps. $$
  Since $\eps>0$ was arbitrary, this concludes the proof.
\end{proof}

So far, we have proved weak consistency of both the standardized empirical risk and the empirical risk minimizer under mild assumptions on the scaling function $\omega$. However, the rates of convergence may be arbitrarily slow. As condition \eqref{eq:omegacond} does not guarantee any finite moments of $\ang$ of order greater than 1, it will not suffice to establish useful risk bounds. In the next section, we therefore analyze the recovery risk under the stronger assumption that $\ang$ is bounded.

\section{Uniform risk bounds}
 \label{sec:recover}

Since a minimizer $\hat V_t$ of the empirical risk $\hat R_t$ (or $\hat V_n$ of $R_{n,k}$) differs from the minimizer of the true risk $R_t$, usually the so-called excess risk $R_t(\hat V_t)-\inf_{V\in\VV_p} R_t(V)$ will be strictly positive. We follow the common approach in the theory of risk minimization to bound the excess risk by deriving uniform bounds on $|\hat R_t(V)-R_t(V)|$ which hold with high probability for a fixed sample size $n$. If these uniform bounds can be calculated from the observed data, they may also be used to construct confidence intervals for the reconstruction error $R_t(\hat V_t)$ resp.\ $R_{t_{n,k}}(\hat V_n)$.

Since tight concentration inequalities are available only for subgaussian distributions, in this section we will assume that the scaling function $\omega$ satisfies the following condition:
\begin{equation}\label{eq:omegacond2}
  \omega(x)\le \frac 1{\|x\|},\quad \forall\, x\in\R^d,
\end{equation}
so that $\|\angf(x)\|\le 1$ for all $x\in\R^d$. Moreover, we suppose that the \cdf of $\|X\|$ is continuous in the tail to avoid technicalities. Then we may assume w.l.o.g.\ that there are no ties and thus exactly $k$ observations with norm larger than $\hat t_{n,k}$.

For classical PCA  (and a kernel version thereof), \cite{shawe2005eigenspectrum} established uniform risk bounds for bounded random vectors $Z_i$, which were improved by the following result by \cite{blanchard2007statistical}. Assume $\|Z_i\|\le 1$, denote the empirical matrix of second (mixed) moments by $\hat\Sigma_n$ and the Hilbert-Schmidt norm on the space of matrices by $\|\point\|_{HS}$. Then, with probability greater than $1-\delta$
\begin{align*}
\Big| \frac 1n\sum_{i=1}^n \|\proj_V^\perp Z_i\|^2- \esp\|\proj_V^\perp Z\|^2\Big| &\le \bigg[\frac p{n-1} \Big(\frac 1n\sum_{i=1}^n \|Z_i\|^4-\|\hat\Sigma_n\|_{HS}\Big)\bigg]^{1/2} \\
  & \quad + \Big(\frac{\log(3/\delta)}{2n}\Big)^{1/2} + \Big(\frac{p^2 \log(3/\delta)}{n^3}\Big)^{1/4}
\end{align*}
for all $V\in\VV_p$.
One may try to derive uniform risk bounds in our extreme value setting by applying this result to the random variables $Z_i=\ang_{i,t} = \ang_i\un\{\|X_i\|>t\}$, so that the left hand side is approximately equal to $\pi_t| \hat R_t(V)-R_t(V)|$  with
$$ \pi_t := \PP\{\|X\|>t\} $$
if one ignores the difference between $N_t$ and its expectation $n\pi_t$.
In the case $\pi_t=o(n^{-1/2})$, however, the above upper bound will not even converge to 0 when it is divided by $\pi_t$ because of the second term. Hence this direct approach does not give meaningful bounds for $|\hat R_t(V)-R_t(V)|$.

The reason for this inconsistency is that, unlike in the classical setting, most of the random variables $Z_i$ will vanish as $t$ increases, and the concentration inequalities used in the proofs of the aforementioned bounds are too crude in such a situation. However, we will take up ideas used by \cite{blanchard2007statistical}, with appropriate modifications, to derive much tighter uniform bounds on $|R_{n,k}(V)-R_{t_{n,k}}(V)|$. Furthermore, we will derive uniform bounds on $|\hat R_t(V)-R_t(V)|$ which hold conditionally on $N_t=\ell$ and depend only on  the data. These can then be used to construct confidence bands for $R_t(V)$.

Before we establish these bounds, we first recall some well-known facts about Hilbert spaces specialized to the present setting, and introduce some notation. Let $(e_i)_{1\le i\le d}$ be an arbitrary orthonormal basis of $\R^d$ and denote by $\langle\point,\point\rangle$ the usual inner product on $\R^d$.
The space of linear operators from $\R^d$ to $\R^d$ (i.e., $d\times d$-matrices) equipped with the inner product $\langle A,B\rangle_{HS} := \sum_{i=1}^d\langle A e_i,Be_i\rangle$ is a Hilbert space. The corresponding Hilbert Schmidt norm can be expressed as $\|A\|_{HS}=\big(\sum_{i=1}^d \|Ae_i\|^2\big)^{1/2}=\big(\tr(AA^\top)\big)^{1/2}$ with $\tr$ denoting the trace operator. If, for any subspace $W$ of $\R^d$, one chooses the first $\dim W$ vectors $e_i$ to form an  orthonormal basis of $W$, then one sees that
\begin{equation} \label{eq:projnorm}
\|\proj_W\|_{HS}=\sqrt{\dim W}.
\end{equation}
 Moreover, direct calculations show that
\begin{equation}  \label{eq:scalarprod}
\langle Ay,x\rangle  =\langle A,xy^\top\rangle_{HS}.
\end{equation}
Finally, for independent centered random matrices $A_i$, $1\le i\le n$, one has
\begin{equation} \label{eq:indepsum}
  \esp \Big\|\sum_{i=1}^n A_i  \Big\|_{HS}^2 =   \sum_{i=1}^n\esp\|A_i\|_{HS}^2.
\end{equation}

 If, for the time being, one neglects the difference between the empirical and the true $(1-k/n)$-quantile of $\|X\|$, then $R_{n,k}(V)$ can be approximated by $\bar R_{t_{n,k}}(V)$ where
 \begin{equation} \label{eq:Rttildedef}
 \bar R_t(V) := \frac 1{n\pi_t} \sum_{i=1}^n \|\proj_V^\perp\Theta_{i,t}\|^2.
 \end{equation}
Denote the empirical distribution of the observed random vectors $X_i$, $1\le i\le n$,  by $P_n$.
For any threshold $t>0$, the maximal difference between the approximate empirical risk $\bar R_t(V)$ and the true risk $R_t(V)$ can be rewritten as
\begin{align*}
 \sup_{V\in\VV_p} |\bar R_t(V)-R_t(V)| & = \sup_{V\in\VV_p} \frac 1{\pi_t}\Big|\frac 1n \sum_{i=1}^n \|\proj_V^\perp\ang_{i,t}\|^2- \esp\|\proj_V^\perp\ang_t\|^2\Big|\\
  & = \frac 1{\pi_t} \sup_{V\in\VV_p} \big| (P_n-P)(\|\proj_V^\perp\angf_t\|^2)\big|\\
  & = \frac 1{\pi_t} \max(\varphi^+(X_1,\ldots,X_n),\varphi^-(X_1,\ldots,X_n))
\end{align*}
with
$$ \varphi^\pm_t(x_1,\ldots,x_n) := \sup_{V\in\VV_p} \pm\Big(\frac 1n\sum_{i=1}^n\|\proj_V^\perp\angf_t(x_i)\|^2-P\|\proj_V^\perp\angf_t\|^2\Big). $$
For brevity's sake, in what follows  we use the notation $x_{i:j}:=(x_i,\ldots,x_j)$ for a subvector of $(x_1,\ldots,x_n)$.

In order to derive uniform bounds on the difference between the empirical and the true risk, we first establish concentration inequalities for $\varphi_t^\pm(X_1,\ldots,X_n)$ using a version of the bounded difference inequality by \cite[Theorem 3.8]{mcdiarmid1998concentration}, which we recall for convenience.
\begin{theorem}
\label{colt:thm-berstein}
Let $ X_{1:n} = ( X_1,\ldots, X_n) $ be an \iid sample  taking its values in some space $ E$ and $\varphi:{ E}^n\to\rset$ be any measurable function.
Consider the positive deviation functions, defined for $1\le m\le n$ and for ${ x}_{1:m}\in { E}^m$,
\[
h_m( x_{1:m}) = \esp\big(\varphi( x_{1:m},  X_{m+1:n})  -
\varphi( x_{1:{m-1}},  X_{m:n} )\big).
\]
Denote their maximum by
\begin{equation}
  \label{eq:maxdev}
  \textrm{maxdev}^+ = \max_{1\le m\le n} \sup_{x_{1:m}\in E^m} h_m(x_{1:m}),
\end{equation}
and the maximal summed variance by
\begin{equation}
  \label{eq:maxSvar}
  \hat v = \sup_{x_{1:n}\in E^n}\sum_{m=1}^n \var h_m( x_{1:m-1}, X_m).
\end{equation}
If both $\textrm{maxdev}^+$ and $\hat v$ are finite, then for all $u \ge 0$,
\begin{align*}
\mathbb{P} \big\{ \varphi( X_{1:n}) - \esp\varphi( X_{1:n})\ge u  \big\} \le \exp{\left(- \frac{u^2}{ 2(\hat v +  \textrm{maxdev}^+ u/3 )} \right)}.
\end{align*}
\end{theorem}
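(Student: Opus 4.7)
The plan is to apply the classical Doob martingale construction to $\varphi(X_{1:n})$ and then invoke a Bernstein/Bennett-type concentration inequality for martingales with bounded positive increments and controlled predictable quadratic variation (Freedman's inequality). This is the standard route to Theorem~3.8 of~\cite{mcdiarmid1998concentration}, which is being recalled here.

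First I would set up the Doob martingale: let $M_0 = \esp\varphi(X_{1:n})$, $M_m = \esp[\varphi(X_{1:n}) \mid X_{1:m}]$ for $1\le m\le n$, and $D_m = M_m - M_{m-1}$. By independence of the $X_i$, both conditional expectations can be written as ordinary expectations over the tail coordinates with the head coordinates fixed, and their difference matches the deviation function in the statement exactly:
\[
D_m \;=\; h_m(X_{1:m}) \quad \text{a.s.}
\]
Consequently $(M_m)$ is a martingale with respect to $\mathcal{F}_m = \sigma(X_{1:m})$, we have $M_n - M_0 = \varphi(X_{1:n}) - \esp\varphi(X_{1:n})$, and the required martingale property $\esp[h_m(X_{1:m-1}, X_m) \mid X_{1:m-1}] = 0$ follows from Fubini applied to the definition of $h_m$.

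The two quantitative inputs required by Freedman's inequality can then be read off directly from the assumptions. The pointwise upper bound in the definition of $\textrm{maxdev}^+$ gives $D_m \le \textrm{maxdev}^+$ almost surely. For the predictable quadratic variation, conditioning on $\mathcal{F}_{m-1}$ fixes $X_{1:m-1}$ and leaves only the randomness of $X_m$, whence
\[
\sum_{m=1}^n \esp\!\big[D_m^2 \,\big|\, \mathcal{F}_{m-1}\big] \;=\; \sum_{m=1}^n \var\!\big(h_m(x_{1:m-1}, X_m)\big)\Big|_{x_{1:m-1}=X_{1:m-1}} \;\le\; \hat v
\]
almost surely, by the definition of $\hat v$ as a supremum over trajectories $x_{1:n}\in E^n$.

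Finally, Freedman's Bernstein inequality for a martingale with increments almost surely bounded above by $c$ and predictable quadratic variation almost surely at most $v$ yields $\PP\{M_n - M_0 \ge u\} \le \exp(-u^2/(2(v + cu/3)))$; substituting $c = \textrm{maxdev}^+$ and $v = \hat v$ produces exactly the claimed inequality. There is no genuine obstacle here, since this is a classical result; the only step deserving any care is the algebraic identification $D_m = h_m(X_{1:m})$ together with the verification of the martingale property, both of which are routine consequences of Fubini and the independence of the $X_i$.
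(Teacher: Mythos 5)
The paper does not prove Theorem~\ref{colt:thm-berstein}; it merely recalls the statement from \cite[Theorem 3.8]{mcdiarmid1998concentration} for convenience, so there is no in-paper argument for you to be compared against. Your sketch is a correct rendition of the standard proof of that result: set up the Doob martingale $M_m=\esp[\varphi(X_{1:n})\mid X_{1:m}]$, identify the increments $D_m=M_m-M_{m-1}=h_m(X_{1:m})$ via independence and Fubini, observe that $\textrm{maxdev}^+$ bounds the increments from above almost surely while $\hat v$ bounds the predictable quadratic variation $\sum_m\esp[D_m^2\mid\mathcal F_{m-1}]$ almost surely, and then close with a martingale Bernstein-type tail bound. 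The one point deserving a little care is the appeal to ``Freedman's inequality''. The classical Freedman bound gives $\exp\bigl(-u^2/(2(v+cu))\bigr)$, i.e.\ without the factor $1/3$ in the linear term. To obtain the sharper $cu/3$ form asserted in the theorem one needs the conditional Bennett moment-generating-function estimate for a centered variable $Y\le b$ a.s.\ with $\var Y\le\sigma^2$, namely $\esp e^{sY}\le\exp\bigl(\sigma^2(e^{sb}-1-sb)/b^2\bigr)$, applied conditionally to each increment, combined with the elementary bound $e^x-1-x\le x^2/\bigl(2(1-x/3)\bigr)$ for $0\le x<3$ before optimizing over $s$; this conditional MGF step is exactly what the proof of McDiarmid's Theorem 3.8 develops. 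So you should either cite a source for the martingale Bernstein inequality explicitly in the $cu/3$ form or include that MGF computation; apart from this the argument is complete and correct.
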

\begin{lemma} \label{lem:concentrationmaxdiff}
  For all $u>0$,
  $$ \PP\big\{\varphi_t^\pm(X_1,\ldots,X_n)\ge \esp  \varphi_t^\pm(X_1,\ldots,X_n)+u\big\}\le \exp\Big(-\frac{nu^2}{2(\pi_t(1+\pi_t)+u/3)}\Big). $$
\end{lemma}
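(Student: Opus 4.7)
The plan is to apply Theorem~\ref{colt:thm-berstein} directly to $\varphi = \varphi_t^+$, the case of $\varphi_t^-$ being identical by symmetry (both are suprema over $\VV_p$ of centered empirical averages of the type $(P_n-P)(\pm\|\proj_V^\perp\angf_t\|^2)$). The decisive ingredient is condition~\eqref{eq:omegacond2}, which guarantees $0 \le f_V(x) := \|\proj_V^\perp \angf_t(x)\|^2 \le \un\{\|x\|>t\}$ uniformly in $V\in\VV_p$, so each summand in the empirical average is \emph{self-bounded} by the tail indicator. Using the standard ``pick the optimizer of whichever supremum is larger'' argument, replacing $x_m$ by any $\tilde x_m\in\R^d$ changes $\varphi_t^+$ by at most $\tfrac 1n f_{V^\star}(x_m)\le\tfrac 1n\un\{\|x_m\|>t\}$ in one direction and at most $\tfrac 1n\un\{\|\tilde x_m\|>t\}$ in the other, yielding the symmetric estimate
\[
  \bigl|\varphi_t^+(x_{1:n}) - \varphi_t^+(x_{1:m-1},\tilde x_m, x_{m+1:n})\bigr|\le \tfrac 1n\bigl(\un\{\|x_m\|>t\}+\un\{\|\tilde x_m\|>t\}\bigr).
\]
Integrating the one-sided version over the remaining coordinates gives $h_m(x_{1:m})\le \tfrac 1n\un\{\|x_m\|>t\}\le \tfrac 1n$, so $\mathrm{maxdev}^+\le 1/n$.

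To bound $\hat v$, fix $x_{1:m-1}$ and let $\bar h(y):=\esp[\varphi_t^+(x_{1:m-1},y,X_{m+1:n})]$, so that $h_m(x_{1:m-1},X_m)=\bar h(X_m)-\esp\bar h(X_m)$. The symmetric bounded-differences estimate above transfers pointwise to $\bar h$ after integrating out $X_{m+1:n}$ (the bound being deterministic in $y,y'$), giving $|\bar h(y)-\bar h(y')|\le \tfrac 1n\bigl(\un\{\|y\|>t\}+\un\{\|y'\|>t\}\bigr)$. With an independent copy $X_m'$ of $X_m$, the Efron-Stein identity then yields
\begin{align*}
\var h_m(x_{1:m-1},X_m) &= \var \bar h(X_m)\le \tfrac 12\esp\bigl[(\bar h(X_m)-\bar h(X_m'))^2\bigr]\\
&\le \tfrac{1}{2n^2}\esp\bigl[(\un\{\|X_m\|>t\}+\un\{\|X_m'\|>t\})^2\bigr] = \tfrac{\pi_t(1+\pi_t)}{n^2},
\end{align*}
where the last equality uses independence together with $\un\{\cdot\}^2=\un\{\cdot\}$. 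Summing over $m=1,\ldots,n$ produces $\hat v\le \pi_t(1+\pi_t)/n$. Plugging $\mathrm{maxdev}^+\le 1/n$ and $\hat v\le\pi_t(1+\pi_t)/n$ into Theorem~\ref{colt:thm-berstein} and pulling the common factor $n$ out of the denominator gives the claim exactly as stated.

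The main technical point is the variance computation above: the self-bounding property inherited from \eqref{eq:omegacond2}, rather than a crude uniform bound on $f_V$, is what scales $\hat v$ with the small tail probability $\pi_t$ instead of an order-one constant. This is precisely what is needed to rescue the approach from the shortcoming (identified in the paragraph preceding the lemma) of a direct application of the \cite{blanchard2007statistical} inequality in the extreme-value regime $\pi_t=o(n^{-1/2})$.
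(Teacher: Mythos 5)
Your proof is correct and follows essentially the same route as the paper: apply Theorem~\ref{colt:thm-berstein} to $\varphi_t^\pm$, using the self-bounding property $f_V(x)\le \un\{\|x\|>t\}$ (a consequence of~\eqref{eq:omegacond2}) to obtain $\mathrm{maxdev}^+\le 1/n$ and $\hat v\le \pi_t(1+\pi_t)/n$, which scales correctly with the small tail probability $\pi_t$. The only cosmetic difference is the variance step: where you symmetrize to get $|\bar h(y)-\bar h(y')|\le n^{-1}(\un\{\|y\|>t\}+\un\{\|y'\|>t\})$ and invoke the elementary identity $\var Z=\tfrac12\esp[(Z-Z')^2]$ (not really Efron--Stein, which is an inequality), the paper bounds $|h_m(x_{1:m})|\le n^{-1}\bigl(\pi_t\vee\un\{\|x_m\|>t\}\bigr)$ directly and takes $\esp h_m^2$ --- both yield the same bound on $\hat v$.
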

\begin{proof}
   The assertion follows immediately from Theorem \ref{colt:thm-berstein} applied to $\varphi_t^\pm$ and the following bounds:
   \begin{align*}
     h_m( x_{1:m}) & = \esp\big(\varphi^\pm_t(x_{1:m},X_{m+1:n})-\varphi^\pm_t(x_{1:m-1},X_{m:n})\big)\\
      & \le \frac 1n \esp\Big( \sup_{V\in\VV_p} \Big| \|\proj_V^\perp \angf_t(X_m)\|^2-\|\proj_V^\perp\angf_t(x_m) \|^2\Big|\Big)\\
      & \le \frac 1n \esp\big(\un\{\|X_m\|>t\| \text{ or } \|x_m\|>t\}\big)\\
      & = \frac 1n(\pi_t\vee \un\{\|x_m\|>t\})\\
      & \le \frac 1n
   \end{align*}
   and
   \begin{align*}
    \sum_{m=1}^n \var h_m(x_{1:m-1},X_m) & \le \sum_{m=1}^n \esp h_m^2(x_{1:m-1},X_m)\\
    & \le \frac 1n \esp(\pi_t\vee \un\{\|X\|>t\})^2 \\
    & = \frac{\pi_t^2(1-\pi_t)+\pi_t }n\\
    & \le \frac{\pi_t(1+\pi_t)}n.
   \end{align*}
\end{proof}
The expectation $\esp  \varphi_t^\pm(X_1,\ldots,X_n)$ can be bounded using arguments from \cite{blanchard2007statistical}.
\begin{lemma} \label{lem:expectationmaxdiff}
  $$ \esp  \varphi_t^\pm(X_1,\ldots,X_n)\le \Big[ \frac{p\wedge(d-p)}n \pi_t\big(\esp_t\|\ang\|^4-\pi_t \tr(\Sigma_t^2)\big)\Big]^{1/2} $$
  with $\Sigma_t := \esp_t\ang\ang^\top$.
\end{lemma}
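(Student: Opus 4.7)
The plan is to reduce the supremum over $V$ to an operator Cauchy--Schwarz bound via the Hilbert--Schmidt identity \eqref{eq:scalarprod}, then control the resulting centered empirical mean of matrices using \eqref{eq:indepsum}. The key observation is that $\|\proj_V^\perp y\|^2 = \langle \proj_V^\perp, yy^\top\rangle_{HS}$, so
$$ \frac 1n\sum_{i=1}^n \|\proj_V^\perp \angf_t(X_i)\|^2 - P\|\proj_V^\perp \angf_t\|^2 = \bigl\langle \proj_V^\perp, M_n\bigr\rangle_{HS}, $$
where $M_n := \frac 1n\sum_{i=1}^n \angf_t(X_i)\angf_t(X_i)^\top - \esp \angf_t\angf_t^\top$. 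Cauchy--Schwarz combined with $\|\proj_V^\perp\|_{HS}=\sqrt{d-p}$ from \eqref{eq:projnorm} then bounds the quantity inside the supremum by $\sqrt{d-p}\,\|M_n\|_{HS}$, uniformly in $V\in\VV_p$. This already yields the factor $\sqrt{d-p}$.

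To obtain the sharper factor $\sqrt p$ I would first invoke the Pythagorean decomposition $\|\proj_V^\perp y\|^2 = \|y\|^2 - \|\proj_V y\|^2$ to split the argument of $\varphi_t^\pm$ into the $V$-free piece $A_n := \frac 1n\sum_i\|\angf_t(X_i)\|^2 - P\|\angf_t\|^2$ and the $V$-dependent piece $B_n(V) := \frac 1n\sum_i\|\proj_V\angf_t(X_i)\|^2 - P\|\proj_V \angf_t\|^2$. Since $A_n$ factors out of the supremum and satisfies $\esp A_n=0$, we obtain $\esp \varphi_t^+ = \esp\sup_V(-B_n(V))$ and $\esp \varphi_t^- = \esp\sup_V B_n(V)$. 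Writing $B_n(V) = \langle \proj_V, M_n\rangle_{HS}$ and using $\|\proj_V\|_{HS}=\sqrt p$ gives $\sup_V |B_n(V)| \le \sqrt p\,\|M_n\|_{HS}$. Combining both directions yields
$$ \esp \varphi_t^\pm \le \sqrt{p\wedge(d-p)}\,\esp\|M_n\|_{HS}. $$

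The final step is to compute $\esp\|M_n\|_{HS}$. Jensen's inequality and \eqref{eq:indepsum}, applied to the i.i.d.\ centered rank-one matrices $\angf_t(X_i)\angf_t(X_i)^\top - \esp\angf_t\angf_t^\top$, give
$$ \bigl(\esp\|M_n\|_{HS}\bigr)^2 \le \esp\|M_n\|_{HS}^2 = \frac 1n\bigl(\esp\|\angf_t\angf_t^\top\|_{HS}^2 - \|\esp\angf_t\angf_t^\top\|_{HS}^2\bigr). $$
Since $\angf_t\angf_t^\top$ is rank one, $\|\angf_t\angf_t^\top\|_{HS}^2=\|\angf_t\|^4$. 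The indicator $\un\{\|X\|>t\}$ built into $\angf_t$ converts $\PP$-expectations into $\pi_t$ times $P_t$-expectations, so $\esp\|\angf_t\|^4 = \pi_t\,\esp_t\|\ang\|^4$ and $\esp\angf_t\angf_t^\top = \pi_t\Sigma_t$, hence $\|\esp\angf_t\angf_t^\top\|_{HS}^2 = \pi_t^2\tr(\Sigma_t^2)$. Substituting these identities produces the announced bound. The only non-mechanical step is spotting that the $\|y\|^2$-part has to be removed first in order to replace $\sqrt{d-p}$ by $\sqrt p$; all remaining manipulations are bookkeeping with the Hilbert--Schmidt identities \eqref{eq:projnorm}--\eqref{eq:indepsum} already recorded.
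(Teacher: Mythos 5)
Your proof is correct and follows essentially the same route as the paper: the Hilbert--Schmidt Cauchy--Schwarz bound with $\|\proj_V^\perp\|_{HS}=\sqrt{d-p}$, the Pythagorean split $\|\proj_V^\perp y\|^2 = \|y\|^2 - \|\proj_V y\|^2$ to extract the $V$-free mean-zero term and obtain the factor $\sqrt p$, Jensen plus \eqref{eq:indepsum} to reduce to the variance of the rank-one matrix $\angf_t\angf_t^\top$, and the final bookkeeping $\|\angf_t\angf_t^\top\|_{HS}^2=\|\angf_t\|^4$, $\esp\angf_t\angf_t^\top=\pi_t\Sigma_t$. Nothing to change.
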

\begin{proof}
  Since, by \eqref{eq:scalarprod}, $\|\proj_Wx\|^2 = \langle \proj_W x,x\rangle = \langle \proj_W,xx^\top\rangle_{HS}$ for any linear subspace $W$ and any $x\in\R^d$, using the bilinearity of the inner product and the Cauchy-Schwarz inequality in the Hilbert-Schmidt space, we obtain
  $$ \pm(P_n-P)(\|\proj_V^\perp\angf_t\|^2) = \Big\langle \proj_V^\perp, \pm(P_n-P)(\angf_t\angf_t^\top)\Big\rangle_{HS} \le \|\proj_V^\perp\|_{HS} \|(P_n-P)(\angf_t\angf_t^\top)\|_{HS}.
  $$
  Using \eqref{eq:projnorm} and taking the supremum over all $V\in\VV_p$ and the expectation, one arrives at
  \begin{equation} \label{eq:expectvarbound1}
   \esp\varphi_t^\pm(X_1,\ldots,X_n) \le \sqrt{d-p}\esp\|(P_n-P)(\angf_t\angf_t^\top)\|_{HS}.
  \end{equation}
  One the other hand, by first rewriting $\|\proj_V^\perp \angf_t\|^2= \|\angf_t\|^2-\|\proj_V \angf_t\|^2$, analogously one obtains
  \begin{align}
   \esp\varphi_t^\pm (X_1,\ldots,X_n)& = \esp\Big(\sup_{V\in\VV_p}\pm(P_n-P)(\|\proj_V^\perp\angf_t\|^2)\Big)  \nonumber\\
      &= \esp\big((P_n-P)\|\angf_t\|^2\big) + \esp\Big(\sup_{V\in\VV_p}\mp (P_n-P)(\|\proj_V\angf_t\|^2)\Big)  \nonumber\\
       & \le  0 +  \sup_{V\in\VV_p}\|\proj_V\|_{HS} \esp\|(P_n-P)(\angf_t\angf_t^\top)\|_{HS}  \nonumber\\
       & \le \sqrt{p} \esp\|(P_n-P)(\angf_t\angf_t^\top)\|_{HS}.  \label{eq:expectvarbound2}
   \end{align}
   Now, by the Cauchy-Schwarz inequality and \eqref{eq:indepsum},
   \begin{align*}
    \esp\|(P_n-P)(\angf_t\angf_t^\top)\|_{HS} & \le \big( \esp\|(P_n-P)(\angf_t\angf_t^\top)\|^2_{HS}\big)^{1/2}\\
     & = \Big( \esp\Big\|\frac 1n \sum_{i=1}^n \big(\ang_{i,t}\ang_{i,t}^\top-\esp(\ang_{i,t}\ang_{i,t}^\top)\big)\Big\|^2_{HS}\Big)^{1/2}\\
     & = \Big(\frac 1n \esp\|\ang_t\ang_t^\top- \esp\ang_t\ang_t^\top\|^2_{HS}\Big)^{1/2}.
   \end{align*}
   Combining this with \eqref{eq:expectvarbound1} and \eqref{eq:expectvarbound2}, we arrive at
   $$ \esp  \varphi_t^\pm(X_1,\ldots,X_n)\le \Big[ \frac{p\wedge(d-p)}n \esp\|\ang_t\ang_t^\top- \esp\ang_t\ang_t^\top\|^2_{HS}\Big]^{1/2}. $$

   It remains to show that $\esp\|\ang_t\ang_t^\top- \esp\ang_t\ang_t^\top\|^2_{HS}=\pi_t\big(\esp_t\|\ang\|^4-\pi_t \tr(\Sigma_t^2)\big)$. From the representation of the Hilbert Schmidt norm by the trace operator and the linearity of the latter, one may conclude by direct calculations that
   \begin{align*}
      \esp\|\ang_t\ang_t^\top- \esp\ang_t\ang_t^\top\|^2_{HS} & = \tr\big(\esp (\ang_t\ang_t^\top-\esp\ang_t\ang_t^\top)^2\big)\\
      & = \tr\big( \esp(\ang_t\ang_t^\top)^2\big)- \tr\big((\esp\ang_t\ang_t^\top)^2\big)\\
      & = \pi_t \tr\big( \esp_t(\ang\ang^\top)^2\big)- \tr\big((\pi_t\esp_t\ang\ang^\top)^2\big)\\
      & = \pi_t \esp_t \tr\big((\ang\ang^\top)^2\big)-\pi_t^2 \tr(\Sigma_t^2).
   \end{align*}
   Hence the assertion follows from
   $$ \tr\big((\ang\ang^\top)^2\big) = \sum_{j=1}^d \|\ang\ang^\top e_j\|^2 = \sum_{j=1}^d \sum_{l=1}^d \big(\ang^{(l)}\ang^{(j)}\big)^2 = \|\ang\|^4
   $$
   with $e_j$ denoting the $j$th unit vector.
\end{proof}

Now we are ready to state our first uniform risk bound. Recall that $\Sigma_t:=\esp_t(\ang\ang^\top)$.
\begin{theorem} \label{th:unifboundRnk}
  For all $u,v>0$ one has
  \begin{align}
    \PP\Big\{\sup_{V\in\VV_p} & |R_{n,k}(V)-R_{t_{n,k}}(V)|\ge \Big[\frac{p\wedge (d-p)}k S_{t_{n,k}}\Big]^{1/2}+u+v\Big\} \nonumber\\
     & \le 2\exp\Big(-\frac{ku^2}{2(1+k/n+u/3)}\Big) + 2\exp\Big( -\frac{kv^2}{2(1+v/3)}\Big) \label{eq:unifboundRnk1}
  \end{align}
  with $S_t:= \esp_t\|\ang\|^4-\pi_t\tr(\Sigma_t^2)$.

  In particular, with probability greater than or equal to $1-\delta$
  \begin{align}
   \sup_{V\in\VV_p} & |R_{n,k}(V)-R_{t_{n,k}}(V)|  \nonumber\\
    &\le \Big[\frac{p\wedge (d-p)}k S_{t_{n,k}}\Big]^{1/2}
     + \frac{2\log(4/\delta)}{3k} + \Big[ \Big(\frac{\log(4/\delta)}{3k}\Big)^2+ \frac 2k (1+k/n)\log(4/\delta)\Big]^{1/2} \nonumber\\
      & \quad + \Big[ \Big(\frac{\log(4/\delta)}{3k}\Big)^2+ \frac 2k \log(4/\delta)\Big]^{1/2} \nonumber\\
    & \le \Big[\frac{p\wedge (d-p)}k S_{t_{n,k}}\Big]^{1/2}
      + \Big[  \frac 8k (1+k/n)\log(4/\delta)\Big]^{1/2}+ \frac{4\log(4/\delta)}{3k}. \label{eq:unifboundRnk2}
    \end{align}
\end{theorem}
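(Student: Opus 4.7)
The plan is to split
\[
R_{n,k}(V) - R_{t_{n,k}}(V) = \bigl(R_{n,k}(V) - \bar R_{t_{n,k}}(V)\bigr) + \bigl(\bar R_{t_{n,k}}(V) - R_{t_{n,k}}(V)\bigr),
\]
where $\bar R_t$ is the deterministic-threshold analog from \eqref{eq:Rttildedef}. Since the two terms have very different origins -- one is due to replacing $\hat t_{n,k}$ by its deterministic counterpart $t_{n,k}$, the other is the usual empirical-process fluctuation -- we will handle them separately and control each with probability $\delta/2$, corresponding to the parameters $v$ and $u$ in \eqref{eq:unifboundRnk1}.

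\textbf{Step 1 (random vs.\ deterministic threshold).} By \eqref{eq:omegacond2} one has $\|\angf(x)\|\le 1$, hence $\|\proj_V^\perp \angf(x)\|^2\le 1$ uniformly in $V$ and $x$. Because exactly $k$ observations exceed $\hat t_{n,k}$ and $N_{t_{n,k}}$ exceed $t_{n,k}$, the indicators $\un\{\|X_i\|>\hat t_{n,k}\}$ and $\un\{\|X_i\|>t_{n,k}\}$ differ on at most $|k-N_{t_{n,k}}|$ indices, so
\[
\sup_{V\in\VV_p} |R_{n,k}(V) - \bar R_{t_{n,k}}(V)| \;\le\; \frac{|N_{t_{n,k}}-k|}{k}.
\]
Since $N_{t_{n,k}}$ is binomial with mean $k$ and variance $k(1-k/n)\le k$, Bernstein's inequality gives $\PP\{|N_{t_{n,k}}-k|\ge kv\}\le 2\exp(-kv^2/(2(1+v/3)))$, yielding the second exponential in \eqref{eq:unifboundRnk1}.

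\textbf{Step 2 (empirical fluctuation at deterministic threshold).} Using the identity
\[
\sup_{V\in\VV_p}|\bar R_t(V)-R_t(V)| = \pi_t^{-1}\max\bigl(\varphi_t^+(X_{1:n}),\varphi_t^-(X_{1:n})\bigr),
\]
with $\pi_{t_{n,k}}=k/n$, I invoke Lemma~\ref{lem:concentrationmaxdiff} and Lemma~\ref{lem:expectationmaxdiff}. The expectation bound reads $\esp\varphi_{t_{n,k}}^\pm \le \sqrt{(p\wedge(d-p)) k S_{t_{n,k}}/n^2}$. Setting $u' = ku/n$ in the concentration inequality and multiplying by $\pi_t^{-1}=n/k$ transforms the deviation $u'$ into $u$, the mean bound into $\sqrt{(p\wedge(d-p))S_{t_{n,k}}/k}$, and the exponent into
\[
-\frac{n(u')^2}{2(\pi_t(1+\pi_t)+u'/3)} \;=\; -\frac{ku^2}{2(1+k/n+u/3)}.
\]
A union bound over the sign $\pm$ yields the first exponential in \eqref{eq:unifboundRnk1}, and adding the events from Step~1 produces \eqref{eq:unifboundRnk1}.

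\textbf{Step 3 (inversion for \eqref{eq:unifboundRnk2}).} Setting each exponential to $\delta/2$ gives two quadratic inequalities in $u$ and $v$ of the form $ax^2 - bx - c \le 0$, whose solutions are $x \le \frac{b+\sqrt{b^2+4ac}}{2a}$. Applying $\sqrt{b^2+4ac}\le b+2\sqrt{ac}$ and the elementary $(A+B)\le A+B$ arithmetic yields the explicit closed-form bound in \eqref{eq:unifboundRnk2}; the second line of \eqref{eq:unifboundRnk2} is just a further simplification merging the two $\log(4/\delta)$-terms and using $\sqrt{a+b}\le\sqrt a +\sqrt b$. The main subtlety I expect in the write-up is keeping track of the rescaling factor $n/k=\pi_{t_{n,k}}^{-1}$ when passing from the concentration statement for $\varphi_{t_{n,k}}^\pm$ to the corresponding statement for $\sup_V|\bar R_{t_{n,k}}-R_{t_{n,k}}|$, so that the final exponents depend on $k$ rather than on $n$; every other step is mechanical once this rescaling is done correctly.
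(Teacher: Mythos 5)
Your proof follows the paper's argument essentially verbatim: the same decomposition into $|R_{n,k}-\bar R_{t_{n,k}}|$ and $|\bar R_{t_{n,k}}-R_{t_{n,k}}|$, the same bound $|N_{t_{n,k}}-k|/k$ with Bernstein for the threshold-swap term, the same rescaling by $\pi_{t_{n,k}}^{-1}=n/k$ applied to Lemmas~\ref{lem:concentrationmaxdiff} and~\ref{lem:expectationmaxdiff}, and the same quadratic inversion for the high-probability form. One small calibration slip in Step~3: to get the $\log(4/\delta)$ in~\eqref{eq:unifboundRnk2} you must set each bare exponential to $\delta/4$ (so each of the two $2\exp(\cdot)$ terms contributes $\delta/2$), not the exponential itself to $\delta/2$; your final answer is correct, so this is just imprecise phrasing.
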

Note that \eqref{eq:unifboundRnk2} also implies an upper bound on the excess risk:
\begin{align*}
  R_{t_{n,k}}(\hat V_n)-\inf_{V\in\VV_p} R_{t_{n,k}}(V) & \le R_{n,k}(\hat V_n) - R_{t_{n,k}}(V_{t_{n,k}}^*) + \sup_{V\in\VV_p} |R_{n,k}(V)-R_{t_{n,k}}(V)| \\
  & \le  R_{n,k}(V_{t_{n,k}}^*) - R_{t_{n,k}}(V_{t_{n,k}}^*) + \sup_{V\in\VV_p} |R_{n,k}(V)-R_{t_{n,k}}(V)| \\
  & \le 2\sup_{V\in\VV_p} |R_{n,k}(V)-R_{t_{n,k}}(V)|.
\end{align*}
\begin{proof}
  With $   \bar R_t(V)$ defined in \eqref{eq:Rttildedef},
  we have
  $$ \sup_{V\in\VV_p} |R_{n,k}(V)-R_{t_{n,k}}(V)| \le  \sup_{V\in\VV_p} |R_{n,k}(V)-\bar R_{t_{n,k}}(V)| +  \sup_{V\in\VV_p} |\bar R_{t_{n,k}}(V)-R_{t_{n,k}}(V)|.
  $$
  By similar arguments as in the proof of Proposition \ref{prop:empriskconsist}, we see that, for all $V\in\VV_p$,
  \begin{align*}
   |R_{n,k}(V)-\bar R_{t_{n,k}}(V)| & = \frac 1k \Big|\sum_{i=1}^n \|\proj_V^\perp\ang_i\|^2(\un\{\|X_i\|>\hat t_{n,k}\}-\un\{\|X_i\|>t_{n,k}\})\Big|\\
   & \le \frac 1k \Big|\sum_{i=1}^n \un\{\|X_i\|>t_{n,k}\}-k\Big|.
  \end{align*}
  By Bernstein's inequality, it follows that
  \begin{align*}
   \PP\Big\{\sup_{V\in\VV_p} |R_{n,k}(V)-\bar R_{t_{n,k}}(V)|\ge v\Big\}
   &\le 2\exp\Big( -\frac{(kv)^2}{2(k(1-k/n)+kv/3)}\Big) \\
   & \le 2\exp\Big( -\frac{kv^2}{2(1+v/3)}\Big).
  \end{align*}
  For the second term
  $ \sup_{V\in\VV_p} |\bar R_{t_{n,k}}(V)-R_{t_{n,k}}(V)|= \frac nk \max(\varphi^+_{t_{n,k}}(X_{1:n}),\varphi^-_{t_{n,k}}(X_{1:n}))$, Lemma \ref{lem:concentrationmaxdiff}, Lemma \ref{lem:expectationmaxdiff}  and $\pi_{t_{n,k}}=k/n$ immediately yield
  \begin{align*}
   \PP\Big\{\sup_{V\in\VV_p} &|\bar R_{t_{n,k}}(V)-R_{t_{n,k}}(V)|\ge \Big[\frac{p\wedge (d-p)}k S_{t_{n,k}}\Big]^{1/2}+u\Big\} \\
   & \le 2 \exp\Big(-\frac{n(uk/n)^2}{2(k/n(1+k/n)+uk/(3n))}\Big)\\
   & = 2 \exp\Big(-\frac{ku^2}{2(1+k/n+u/3)}\Big),
  \end{align*}
  which concludes the proof of the first assertion.

  Check that for
  \begin{align*}
    u& := \frac{\log(4/\delta)}{3k} + \Big[ \Big(\frac{\log(4/\delta)}{3k}\Big)^2+ \frac 2k (1+k/n)\log(4/\delta)\Big]^{1/2}\\
    v & := \frac{\log(4/\delta)}{3k} + \Big[ \Big(\frac{\log(4/\delta)}{3k}\Big)^2+ \frac 2k \log(4/\delta)\Big]^{1/2}
  \end{align*}
  both exponential expressions on the right hand side of \eqref{eq:unifboundRnk1} equal $\delta/4$, and so the upper bound equals $\delta$. Hence the remaining assertions follow from $\sqrt{a+b}\le \sqrt{a}+\sqrt{b}$.
\end{proof}

\begin{remark}
  In the case $\omega(x)=\|x\|$, the upper bound in \eqref{eq:unifboundRnk2} simplifies to
  $$ \Big[\frac{p\wedge (d-p)}k \big(1-(k/n)\tr(\Sigma_{t_{n,k}}^2)\big)\Big]^{1/2}
      + \Big[  \frac 8k (1+k/n)\log(4/\delta)\Big]^{1/2}+ \frac{4\log(4/\delta)}{3k}.\hspace*{\fill}\qed
  $$
\end{remark}

Note that the upper bound in Theorem  \ref{th:unifboundRnk} cannot be calculated from the data and can thus not directly be used to construct confidence intervals for the true reconstruction error $R_{t_{n,k}}(\hat V_n)$ or the minimal reconstruction error $\inf_{V\in\VV_p} R_{t_{n,k}}(V)$. Next, we derive data-dependent bounds directly from (a minor improvement of) the bound established by \cite{blanchard2007statistical}. However, this result will be applied to the conditional distribution of $\ang$ given $\|X\|>t$ and the resulting bound is to be interpreted conditional on the number $N_t$ of exceedances over the chosen threshold $t$.

\begin{theorem} \label{th:condapproach}
   For all $\ell>1, u,v>0$,
   \begin{align*}
     \PP\Big( \sup_{V\in\VV_p} & |\hat R_t(V)-R_t(V)|\ge \Big[(p\wedge (d-p))\Big(\frac{\tilde S_t}{\ell-1}+\frac{v}{\ell}\Big)\Big]^{1/2}+u \,\Big|\, N_t=\ell\Big) \\
      & \le 2\exp\big(-2\ell u^2)+\exp\big(-\floor{\ell/2} v^2/2\big)
   \end{align*}
   with $\tilde S_t := N_t^{-1}\sum_{i=1}^n \|\ang_{i,t}\|^4-\tr\Big((N_t^{-1}\sum_{i=1}^n\ang_{i,t}\ang_{i,t}^\top)^2\Big)$ and $\floor{x}:=\max\{k\in\mathbb{Z}\mid k\le x\}$.

   If, for all $\ell>1$, constants $u_\ell,v_\ell>0$ are chosen such that $2\exp\big(-2\ell u_\ell^2)+\exp\big(-\floor{\ell/2} v_\ell^2/2\big)=1-\alpha$, then
   $I_\ell(V) := \big[\hat R_t(V)-B_{t,\ell}, \hat R_t(V)+B_{t,\ell}\big]\cap [0,\infty)$ with
   $$ B_{t,\ell} :=  \Big[(p\wedge (d-p))\Big(\frac{\tilde S_t}{\ell-1}+\frac{v_\ell}{\ell}\Big)\Big]^{1/2}+u_\ell
   $$
   defines a uniform level $\alpha$ confidence band for $R_t(V)$, $V\in\VV_p$, conditionally on $N_t=\ell$. If one defines $I_0(V)=I_1(V)=[0,\infty)$, then $I_{N_t}(V)$ defines a uniform level $\alpha$ confidence band for $R_t(V)$, $V\in\VV_p$ (unconditionally).
\end{theorem}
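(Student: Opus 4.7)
The strategy is to reduce to a conditional classical PCA setting and then invoke the modified Blanchard--Bousquet--Zwald bound whose proof is deferred to Section~5. Conditional on the event $\{N_t=\ell\}$ with $\ell\ge 1$, the $\ell$ indices $i$ with $\|X_i\|>t$ carry i.i.d.\ observations with distribution $P_t$, and by condition \eqref{eq:omegacond2} the corresponding re-scaled vectors $\ang_i=\angf(X_i)$ all lie in the closed unit ball $B_1(0)$. The empirical risk $\hat R_t(V)$ is then the sample mean of $\|\proj_V^\perp\ang_i\|^2$ over $\ell$ bounded i.i.d.\ vectors, while $R_t(V)=P_t(\|\proj_V^\perp\angf\|^2)$ is its expectation. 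We are thus exactly in the framework of \cite{blanchard2007statistical}, with sample size $\ell$, feature vectors in $B_1(0)$ and target subspace dimension $p$.

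I would then apply the modification of that bound established in Section~5. Three refinements of the original version recalled at the beginning of Section~\ref{sec:recover} are needed. First, the factor $p$ is replaced by $p\wedge(d-p)$ through the decomposition $\|\proj_V^\perp\angf\|^2 = \|\angf\|^2 - \|\proj_V\angf\|^2$, exactly as in the proof of Lemma~\ref{lem:expectationmaxdiff}. Second, the variance factor is taken in its data-dependent form $\tilde S_t/(\ell-1)$ with Bessel's correction, so the bound can be evaluated from the observations. Third, the total deviation is split into two pieces controlled by two independent concentration arguments: the $u$-term, with tail $2\exp(-2\ell u^2)$, is produced by applying McDiarmid's bounded difference inequality to the supremum statistic $\sup_{V\in\VV_p}|\hat R_t(V)-R_t(V)|$, whose coordinate-wise oscillations are at most $1/\ell$ (so that the Hoeffding constant $1/\sum c_i^2 = \ell$ produces the factor $2\ell$ in the exponent); the $v$-term, with tail $\exp(-\lfloor\ell/2\rfloor v^2/2)$, arises when one substitutes $\tilde S_t$ for its population counterpart inside the square root, and is handled by a standard decoupling trick: pairing the $\ell$ exceedances into $\lfloor\ell/2\rfloor$ independent bounded contributions and applying Hoeffding's inequality to the resulting sum.

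The confidence band statement is then a straightforward inversion. Given the conditional concentration inequality, choose $u_\ell,v_\ell$ so that $2\exp(-2\ell u_\ell^2)+\exp(-\lfloor\ell/2\rfloor v_\ell^2/2)=1-\alpha$; then, conditionally on $N_t=\ell$, with probability at least $\alpha$ the inequality $|\hat R_t(V)-R_t(V)|\le B_{t,\ell}$ holds simultaneously for all $V\in\VV_p$, hence $R_t(V)\in I_\ell(V)$ (the intersection with $[0,\infty)$ being free since $R_t(V)\ge 0$). For the unconditional statement, $R_t(V)\in[0,\infty)=I_0(V)=I_1(V)$ with probability one on $\{N_t\in\{0,1\}\}$, so conditioning on $N_t$ and summing yields $\PP(R_t(V)\in I_{N_t}(V)\text{ for all }V\in\VV_p)\ge\alpha$.

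The main obstacle is the careful adaptation sketched in Section~5, and in particular the decoupling step used to concentrate the data-dependent quadratic functional $\tilde S_t$ about its expectation; since $\tilde S_t$ is a non-linear statistic, a naive McDiarmid bound gives visibly weaker constants than the pairing argument, which is what produces the $\lfloor\ell/2\rfloor$ factor. Once this conditional concentration inequality is in hand, the passage to a uniform confidence band for $R_t(V)$ is purely algebraic and the extension to the unconditional regime only requires the trivial treatment of the events $\{N_t\in\{0,1\}\}$.
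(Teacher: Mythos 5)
Your proposal is correct and follows essentially the same route as the paper: reduce to i.i.d.\ observations from $P_t$ conditionally on $N_t=\ell$, apply the modified Blanchard--Bousquet--Zwald bound from Section~5 with the $p\wedge(d-p)$ improvement from Lemma~\ref{lem:expectationmaxdiff}, obtain the $u$-term by McDiarmid with coordinate oscillation $1/\ell$ and the $v$-term by the Hoeffding U-statistic bound based on pairing into $\lfloor\ell/2\rfloor$ independent blocks, and finish with the trivial confidence-band inversion. The only cosmetic slip is the condition ``$\ell\ge 1$'' where the theorem requires $\ell>1$ (otherwise $\tilde S_t/(\ell-1)$ is undefined), but this does not affect the argument.
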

\begin{proof}
   Define \iid  random vectors $Z_i$  whose distribution equals the conditional distribution of $\ang$ given $\|X\|>t$.
   Recall that $\ang_{(i)}:=\angf(X_{(i)})$ where $X_{(i)}$ is the vector $X_j$ with the $i$th largest norm among $X_1,\ldots,X_n$. Then, conditionally on $N_t=\ell$, the joint distribution of the empirical risk $\hat R_t(V)$ and $\ang_{(1)},\ldots\ang_{(\ell)}$ equals the joint distribution of $\ell^{-1} \sum_{i=1}^\ell \|\proj_V^\perp Z_i\|^2$ and the order statistics of $Z_1,\ldots,Z_\ell$. Therefore, the proof of Theorem 3.1 of \cite{blanchard2007statistical} (with $M=1$ and $L=2$) combined with arguments given in the proof of Lemma \ref{lem:expectationmaxdiff} show that
   \begin{align}
   \PP\Big( \sup_{V\in\VV_p} & |\hat R_t(V)-R_t(V)|\ge \Big[\frac{p\wedge (d-p)}{2\ell}\Big(\frac 1{\ell(\ell-1)}\sum_{i,j=1}^\ell \|\ang_{(i)}\ang_{(i)}^\top- \ang_{(j)}\ang_{(j)}^\top\|_{HS}^2+2v\Big)\Big]^{1/2} \nonumber\\
     & { }  +u \,\Big|\, N_t=\ell\Big)  \le 2\exp\big(-2\ell u^2)+\exp\big(-\floor{\ell/2} v^2/2\big).  \label{eq:Blanchardbound}
    \end{align}
   Since the proof of Theorem 3.1 of \cite{blanchard2007statistical}  is quite tersely formulated in a more abstract setting and contains a minor inaccuracy, for convenience we give more details of the proof of \eqref{eq:Blanchardbound} in the Appendix.

   In the same way  as in the proof of Lemma \ref{lem:expectationmaxdiff}, the first assertion thus follows from
   \begin{align*}
     \sum_{i,j=1}^\ell \|\ang_{(i)}\ang_{(i)}^\top- \ang_{(j)}\ang_{(j)}^\top\|_{HS}^2
     & = 2\ell \sum_{i=1}^\ell \|\ang_{(i)}\ang_{(i)}^\top\|^2_{HS}-2 \sum_{i,j=1}^\ell \langle \ang_{(i)}\ang_{(i)}^\top,\ang_{(j)}\ang_{(j)}^\top\rangle_{HS} \\
     & = 2\ell \sum_{i=1}^\ell \|\ang_{(i)}\|^4 - 2\Big\|\sum_{i=1}^\ell \ang_{(i)}\ang_{(i)}^\top\Big\|^2_{HS}\\
     & = 2\ell^2 \bigg(\frac 1\ell \sum_{i=1}^\ell \|\ang_{(i)}\|^4 - \tr\Big(\Big(\frac 1\ell\sum_{i=1}^\ell \ang_{(i)}\ang_{(i)}^\top\Big)^2\Big)\bigg)\\
     & = 2\ell^2 \bigg(\frac 1\ell \sum_{i=1}^n \|\ang_{i,t}\|^4 - \tr\Big(\Big(\frac 1\ell\sum_{i=1}^n \ang_{i,t}\ang_{i,t}^\top\Big)^2\Big)\bigg)
   \end{align*}
   where in the last step we have used that, on $\{N_t=\ell\}$, the set of  non-vanishing vectors $\ang_{i,t}$ equals the set of non-vanishing random vectors $\ang_{(i)}$.

   The remaining assertions are now obvious.
\end{proof}

\begin{remark} \label{rem:shortconfbands}
   In the statement about the confidence bands one may replace $B_{t,\ell}$ with
    $$\tilde B_{t,\ell} := \Big[(p\wedge (d-p))\frac{\tilde S_t}{\ell-1}\Big]^{1/2}+\Big[(p\wedge (d-p))\frac{v_\ell}{\ell}\Big]^{1/2}+u_\ell.
   $$
   This half width of a confidence band is more suitable for (numerical) minimization (as a function of $u_\ell$ and $v_\ell$) under the constraint $2\exp\big(-2\ell u_\ell^2)+\exp(-\floor{\ell/2} v_\ell^2/2)=1-\alpha$.\qed
\end{remark}

\begin{remark}
  The (modified) proof of Theorem 3.1 of \cite{blanchard2007statistical} also shows that
   \begin{align*}
    \PP\Big( \sup_{V\in\VV_p}  |\hat R_t(V)-R_t(V)|\ge \Big[\frac{p\wedge (d-p)}{\ell} S_t^*\Big]^{1/2} +u  \,\Big|\, N_t=\ell\Big)  \le 2\exp\big(-2\ell u^2)
   \end{align*}
   with $S_t^* := \esp_t\|\ang\|^4-\tr(\Sigma_t^2)$.
   Observe that $\bar R_t(V)=N_t\hat R_t(V)/(n\pi_t)$.
   On the set $M_t(v):=\{|N_t-n \pi_t|\le n\pi_t v\}$, one thus has
   \begin{align*}
     \sup_{V\in\VV_p} |\bar R_t(V)-R_t(V)| \le \frac{N_t}{n\pi_t} \sup_{V\in\VV_p}|\hat R_t(V)-R_t(V)|+ v,
   \end{align*}
   since $\hat R_t(V)\le 1$.
   Moreover, for $t=t_{n,k}$, it has been shown in the proof of Theorem \ref{th:unifboundRnk} that $\sup_{V\in\VV_p}|R_{n,k}(V)-\bar R_{t_{n,k}}(V)|\le v$ on the set $M_{t_{n,k}}=\{N_{t_{n,k}}\in[k(1-v),k(1+v)]\}$ and that $\PP(M_{t_{n,k}}^c)\le 2\exp\big(-kv^2/(2(1+v/3))\big)$. Hence,
   \begin{align*}
     \PP&\Big\{\sup_{V\in\VV_p}  |R_{n,k}(V)-R_{t_{n,k}}(V)|\ge \Big[ (1+v)\frac{p\wedge (d-p)}kS_{t_{n,k}}^*\Big]^{1/2}+u+2v\Big\}\\
     & \le \PP\Big(M_{t_{n,k}}\cap\Big\{\sup_{V\in\VV_p}  |\bar R_{t_{n,k}}(V)-R_{t_{n,k}}(V)|\ge \Big[ (1+v)\frac{p\wedge (d-p)}kS_{t_{n,k}}^*\Big]^{1/2}+u+v\Big\}\Big)\\
     & \hspace*{11cm} +\PP(M_{t_{n,k}}^c)\\
     & \le \PP\Big(M_{t_{n,k}}\cap\Big\{\sup_{V\in\VV_p}  |\hat R_{t_{n,k}}(V)-R_{t_{n,k}}(V)|\ge \Big[\frac{p\wedge (d-p)}{N_{t_{n,k}}}S_{t_{n,k}}^*\Big]^{1/2}+\frac{ku}{N_{t_{n,k}}}\Big\}\Big)\\
     & \hspace*{11cm} +\PP(M_{t_{n,k}}^c)\\
     & \le \sum_{\ell=\ceil{k(1-v)}}^{\floor{k(1+v)}} 2\exp\big(-2\ell (ku/\ell)^2\big)\PP\{N_{t_{n,k}}=\ell\} +\PP(M_{t_{n,k}}^c)\\
     & \le 2\exp\Big(-\frac{2ku^2}{1+v}\Big) + 2\exp\Big(-\frac{kv^2}{2(1+v/3)}\Big).
   \end{align*}
   A comparison with Theorem \ref{th:unifboundRnk} reveals that the new bound may be tighter if $S_{t_{n,k}}^*$ is substantially smaller than $S_{t_{n,k}}$. This will be the case if $k/n$ is small and $\tr\big((\esp_t\ang\ang^\top)^2\big)$ is not much smaller than $\esp_t\|\ang\|^4$. \qed
\end{remark}

So far, we have compared empirical risks with the true risk $R_t$ for finite thresholds $t$. A comparison with the limit risk $R_\infty$ would require second order refinements of our basic assumption \eqref{eq:rvalpha}. Let $\Sigma_t:=\esp_t \ang\ang^\top=P_t(\angf\angf^\top)$ and $\Sigma_\infty=P_\infty(\angf\angf^\top)$. Denote the eigenvalues of $\Sigma_t-\Sigma_\infty$ by $\lambda_{t,1}^{\Delta}\ge\lambda_{t,2}^{\Delta}\ge\ldots\ge\lambda_{t,n}^{\Delta}$. Then  standard calculations from classical PCA show that
$$ \sup_{V\in\VV_p} R_t(V)-R_\infty(V)=\sup_U \tr(U^\top(\Sigma_t-\Sigma_\infty)U) = \sum_{i=1}^{d-p} \lambda^\Delta_{t,i}
$$
where the second supremum is taken over all $(d\times(d-p))$-matrices with orthogonal columns. Likewise, $\sup_{V\in\VV_p} R_\infty(V)-R_t(V)= -\sum_{i=1}^{d-p} \lambda^\Delta_{t,d+1-i}$ and hence
$$ \sup_{V\in\VV_p} |R_t(V)-R_\infty(V)|\le \max\Big(\Big|\sum_{i=1}^{d-p}\lambda^\Delta_{t,i}\Big|,\Big|\sum_{i=p+1}^d|\lambda^\Delta_{t,i}\Big|\Big)
= \max\Big(\Big|\sum_{i=1}^p\lambda^\Delta_{t,i}\Big|,\Big|\sum_{i=d-p+1}^d\lambda^\Delta_{t,i}\Big|\Big).
$$
Therefore, bounds on the difference between empirical risks and the limit risk require additional assumptions on the spectrum of the difference $\Sigma_t-\Sigma_\infty$ between the matrix of second moments for the re-scaled exceedances over the threshold $t$ and the corresponding matrix in the limit model.

If one merely wants to compare the minimum risk for finite thresholds with the minimum limit risk, which equal  the sums of $d-p$ smallest eigenvalues of $\Sigma_t$ resp.\ $\Sigma_\infty$, then somewhat weaker assumptions on the convergence of the spectrum of $\Sigma_t$ and $\Sigma_\infty$ are needed. In particular, under Hypothesis 1, $\inf_{V\in\VV_p} R_t(V) - \inf_{V\in\VV_p} R_\infty(V)$ equals the sum of the smallest $d-p$ eigenvalues of $\Sigma_t$.

\section{Simulation study}

We investigate the performance of our PCA procedure. In particular, we examine how the standard non-parametric estimator of the spectral measure (defined via \eqref{eq:spectraldef}) based on the $k$ largest observations
$$ \hat H_{n,k} := \frac 1k \sum_{i=1}^n \delta_{\angf_{t_{n,k}}(X_i)} $$
(with $\angf(x)=x/\|x\|$) is influenced if the data is first projected onto a lower dimensional subspace using PCA:
$$ \hat H_{n,k}^{PCA} := \frac 1k \sum_{i=1}^n \delta_{\angf_{t_{n,k}}(\proj_V^\perp X_i)}. $$
Here, $\delta_y$ is the Dirac measure with point mass at $y$ and $V$ denotes the subspace picked by PCA based on the same number $k$ of largest observations. It will turn out that sometimes it is advisable to use a smaller number $\tilde k$ for the PCA procedure; the resulting estimator of the spectral measure will be denoted by $\hat H_{n,k,\tilde k}^{PCA}$.

To measure the performance of the spectral estimators, we consider the resulting estimators of the following probabilities in the limit model, that can be expressed in terms of the spectral measure:
\begin{itemize}
   \item[(i)] $\lim_{u\to\infty} \PP(p^{-1}\sum_{1\le j\le p} X^j/\|X\|>t_{(i)} \mid \|X\|>u)=H\{x\mid p^{-1} \sum_{j=1}^p x^j>t_{(i)}\}$ for some $t_{(i)}\in (0,p^{-1/2})$
   \item[(ii)] $\lim_{u\to\infty} \PP(\min_{1\le j\le p}X^j>u,\max_{p+1\le j\le d} X^j\le u \mid \|X\|>u)=$\\ $ \int \big((\min_{1\le j\le p}x^j)^\alpha-(\max_{p+1\le j\le d}x^j)^\alpha\big)^+\, H(dx)$
    \item[(iii)] $ \lim_{u\to\infty} \PP(X^1>u\mid \max_{1\le j\le d}X^j>u) = \int (x^1)^\alpha\, H(dx)/\int (\max_{1\le j\le p}x^j)^\alpha\, H(dx)$
   \item[(iv)] $ \lim_{u\to\infty} \PP(\min_{1\le j\le d}X^j>u\mid \|X\|>u) = \int (\min_{1\le j\le d}x^j)^\alpha\, H(dx)$

 \end{itemize}
The first probability is related to the cdf of the mean contribution  of the first $p$ coordinates to the norm of the random vector, thus quantifying, in some sense, how strongly the norm is spread over the coordinates.
Probability (ii) indicates how likely it is that the first $p$ components are all large, while this is not true for any of the other components, given that the norm of the vector is large. Probability (iii) specifies how likely it is that the first component is extreme, given that any component is extreme. In a financial context, such probabilities are used to quantify how strongly a specific market participant is exposed to a failure of any market participant. Finally, probability (iv) specifies the minimal contribution of any coordinate to the norm. Note that under Hypothesis 1 this probability equals 0.
The other true values are determined by Monte Carlo simulations with sample size of at least $10^7$, unless they can be easily calculated analytically; the approximation error is always smaller than $ 10^{-3}$. Throughout, we assume $\alpha$ to be known since we are interested in the effect of the PCA procedure on the estimator of the spectral measure, which should not be compounded with the estimation error of the tail index.

We consider different models of $d$-dimensional regularly varying vectors for which the spectral measure is (approximately) concentrated on a $p$-dimensional subspace. Since PCA is equivariant under rotations, w.l.o.g.\ we may assume that this subspace is spanned by the first $p$ unit vectors.

Two different models for the extreme value dependence structure between the first $p$ coordinates of the vector are investigated.  First, we consider the so-called Dirichlet model; see, for instance, \cite{segers2012}, Ex.\ 3.6, where also a simple algorithm is given to simulate vectors with such an extremal dependence structure. Second, we simulate random vectors with a Gumbel copula $C_\vartheta(x)=\exp\big(-\big(\sum_{i=1}^p(-\log x_i)^\vartheta\big)^{1/\vartheta}\big)$, using the transformation method proposed by \cite{stephenson2003}.   The marginal distributions are chosen as a Fr\'{e}chet distribution with cdf $\exp(-x^{-\alpha})$, $\alpha\in\{1,2\}$.

In addition, we have simulated observations from a Dirichlet model which are then rotated in the plane spanned by two randomly chosen coordinates, one of them among the first $p$ coordinates, the other among the last $d-p$. The rotation angle is uniformly distributed on the interval $[-\pi/10,\pi/10]$. Note that, unlike in the first two models, Hypothesis 1 is not fulfilled here which allows to evaluate how sensitive PCA is to moderate deviations from this ideal situation.

In all cases, we add the modulus of a $d$-dimensional multivariate normal vector with suitable variances  and constant correlations $0.2$. This way, it is ensured that the support of the exceedances  over high thresholds is not fully concentrated on the $p$-dimensional subspace. The variances are chosen equal to $10^5/d$ for $\alpha=1$ (i.e., if we start with unit Fr\'{e}chet margins) and equal to $10/d$ for $\alpha=2$, so that the sparsity assumption becomes apparent for the most extreme observations, whereas large yet less extreme data points are more spread out.

In all settings, we simulate samples of size $n=1000$ and examine the performance of the PCA procedure based on $\ang=X/\|X\|$ for the $k$ vectors with largest norms for $k\in\{5,10,15,\ldots,200\}$. The results reported here are based on 1000 simulations in each setting.

\begin{figure}
\vspace*{-7.5cm}
\hspace*{0.5cm}
\includegraphics[width=\textwidth]{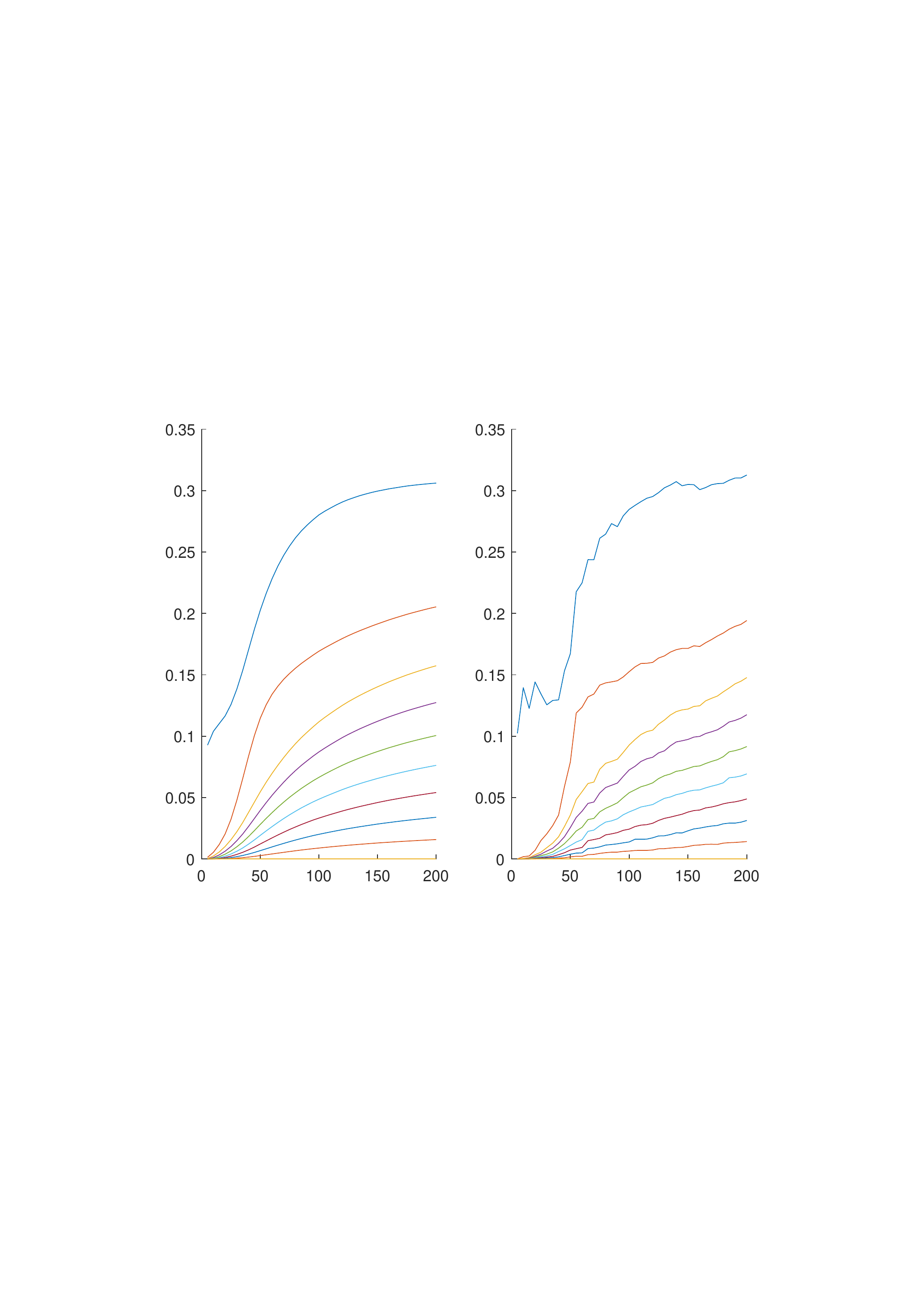} \vspace*{-6.2cm}
\caption{Mean empirical risk (left) and empirical risk for one sample (right) versus $k$ for PCA projecting onto a subspace of dimension $1\le \tilde p\le 10$ in the Dirichlet model with parameter $3$, $p=2$ and $d=10$}
 \label{fig:empriskdiri3_10_2}
\end{figure}

We first discuss the simulation results for the Dirichlet model with all Dirichlet parameters $\alpha_i$, $1\le i\le p$, equal to 3 and unit Fr\'{e}chet margins. Figure \ref{fig:empriskdiri3_10_2} shows the mean empirical risk in the left plot  as a function of $k$ for the PCA which projects onto a $\tilde p$-dimensional subspace with $1\le \tilde p\le 10$; here the true $p$ equals 2 and the vectors have dimension $d=10$. Since the mean empirical risk cannot be observed if one analyzes a given data set, the right plot shows the corresponding empirical risk for a single data set. The structure of both plots is very similar: essentially, the mean empirical risk curves are just a bit smoother. For this reason,  in the remaining settings, we will only report the mean empirical risk.

It is obvious from the risk plot that $\tilde p=2$ is a good choice, since there is a big gap to the empirical risk for $\tilde p=1$, whereas the empirical risk almost vanishes for small $k$ and $\tilde p=2$, and the risk decreases more regularly for values $\tilde p>2$, with no obvious structural breaks. The growing influence of the multivariate normal component as $k$ increases is manifest in these plots, since the empirical risk quickly increases with $k$ for all choices of $\tilde p$. This suggests to choose $k$ rather small to detect the sparsity in the model, a finding which will be corroborated in the analysis of the estimator of the spectral measure below.

In Figure \ref{fig:errdiri3_10_2}, the mean operator norm of the difference between the projection onto the true support of the limit measure $\mu$ and the projection onto the subspace of dimension 2 chosen by PCA is plotted versus $k$. Again it becomes obvious that for less extreme observations the approximation by a lower-dimensional vector is rather poor, which leads to a larger error for the projection matrix estimated from these data. For $k=80$, the norm has almost reached its maximal value. However, one should keep in mind that the operator norm measures the maximal distance between the projection of some vector $y\in\sphere$ onto the estimated respectively the true subspace. If the underlying distribution of $X/\|X\|$ puts little mass on vectors $y$ for which the distance is large, the true risk corresponding to the estimated subspace may still be small. 

\begin{figure}
\vspace*{-6.5cm}
\hspace*{1.5cm}
\includegraphics[width=0.8\textwidth]{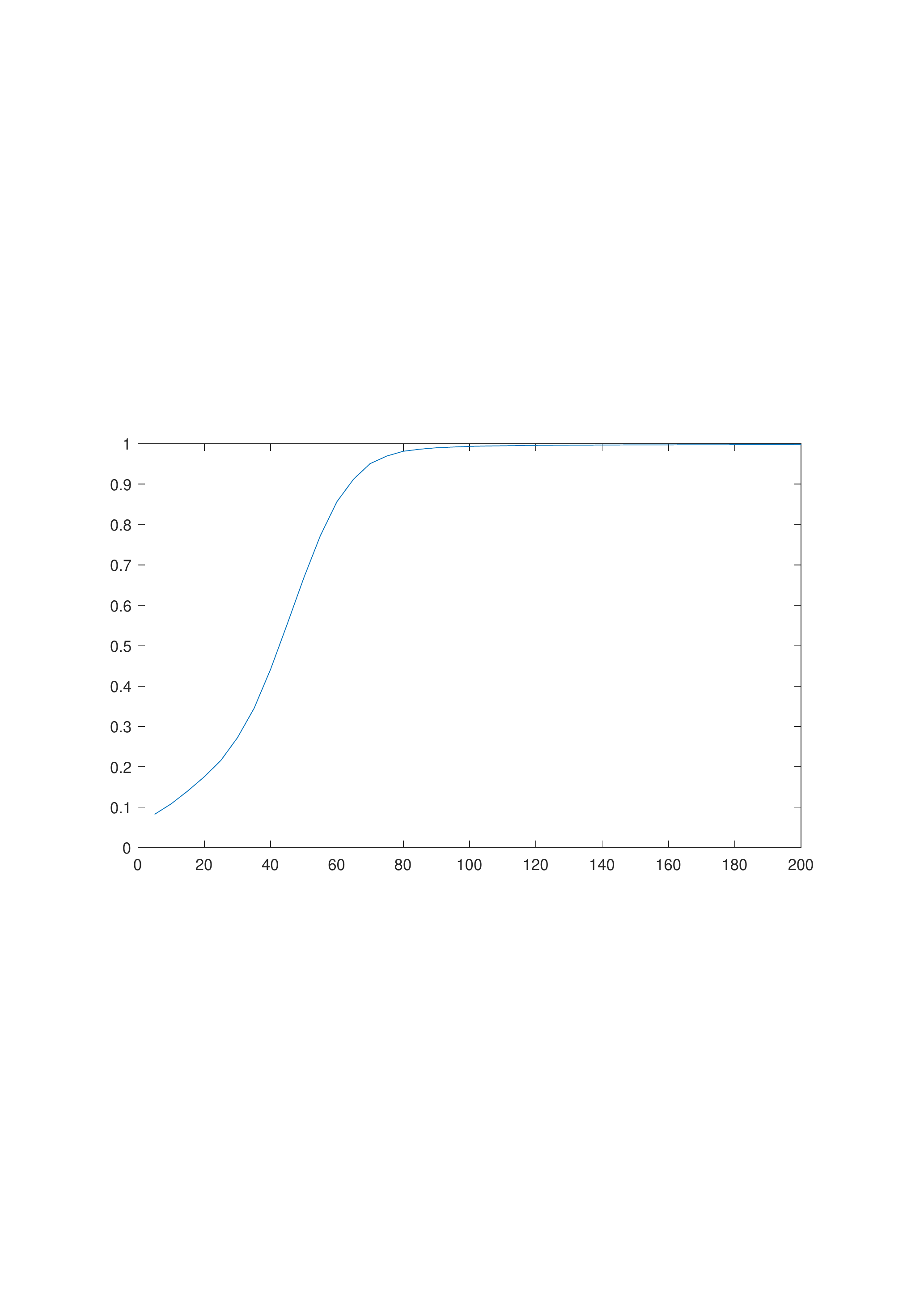} \vspace*{-4.8cm}
\caption{Mean operator norm of the difference between the projection onto the true subspace and the projection onto the two-dimensional subspace picked by PCA  as a function of $k$ in the Dirichlet model with parameter $3$, $p=2$ and $d=10$}
 \label{fig:errdiri3_10_2}
\end{figure}

Next we consider the estimators of the probabilities (i)--(iv), obtained by replacing the spectral measure $H$ either with $\hat H_{n,k}$ or $\hat H_{n,k}^{PCA}$. Since the PCA estimator of the subspace supporting $\mu$ quickly deteriorates as $k$ increases, in addition we consider the estimators resulting from $\hat H_{n,k,10}^{PCA}$, that uses just the largest 10 observations to estimate the supporting subspace.

\begin{figure}
\vspace*{-7.5cm}
\includegraphics[width=\textwidth]{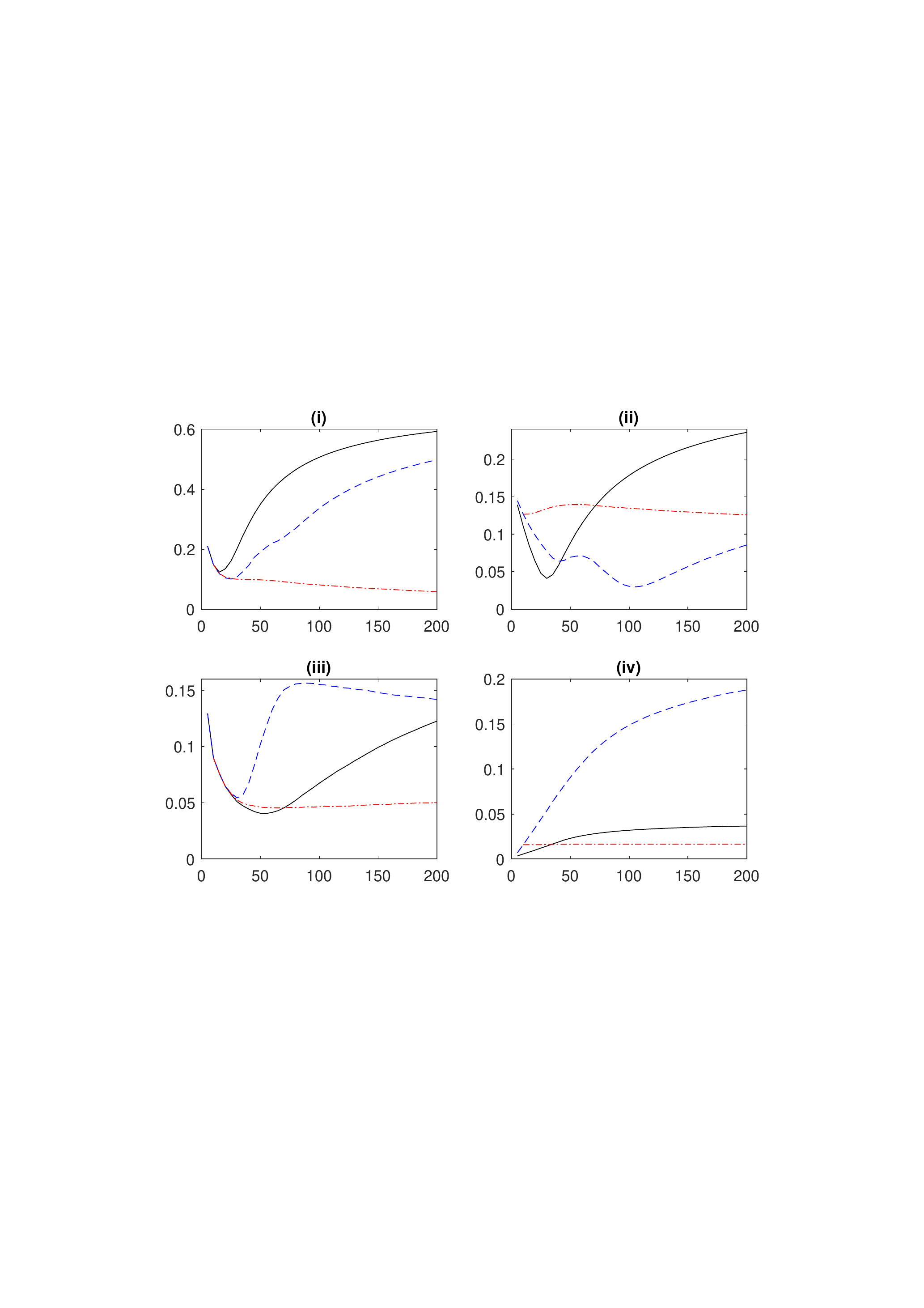} \vspace*{-6.2cm}
\caption{RMSE of the estimators of the probabilities (i)--(iv) based on $\hat H_{n,k}$ (black, solid), $\hat H_{n,k}^{PCA}$ (blue, dashed) and $\hat H_{n,k,10}^{PCA}$ (red, dash-dotted) versus $k$ in the Dirichlet model with parameter $3$, $p=2$ and $d=10$}
 \label{fig:statsdiri3_10_2}
\end{figure}

Figure \ref{fig:statsdiri3_10_2} displays the root mean squared errors (RMSE) of the resulting estimators as a function of $k$. For very small values of $k$, all estimators perform similarly. For probability (i)  with $t_{(i)}=0.65$ (leading to a true value of about 0.684), both PCA based estimators have a considerably smaller RMSE than the standard estimator for most $k$. In particular, the PCA based method using just 10 largest observations to estimate the support of the spectral measure clearly outperforms both other estimators (almost) irrespective of the number of observations used for estimation of the spectral measure.


For the  estimation of  probability (ii) ($\approx 0.309$), the standard non-parametric estimator performs best for $k\le 40$. The classical PCA using the same number of order statistics in both steps performs better for larger values of $k$ and its minimum RMSE is a bit lower than that of the standard estimator. The PCA based estimator which determines the support of $\mu$ from the largest 10 observations has a very stable RMSE, but its minimum is much larger than that both of the other estimators. 

In case (iii) (with true value of about $0.770$), the RMSE of the standard estimator and the estimator based on $\hat H_{n,k,10}^{PCA}$ are very similar for $k$ up to about 80, but the latter is remarkably insensitive to the choice of $k$ up to 200. This feature might be useful in practical applications where the selection of $k$ is often tricky. In contrast,
 the PCA based procedure which uses the same number of largest observations in both steps is even more sensitive to this choice than the standard estimator.

 Similarly, the classical PCA estimator of probability (iv) strongly depends on the choice of $k$ while both other estimators  stably have a very low error.

\begin{figure}
\vspace*{-7.5cm}
\includegraphics[width=\textwidth]{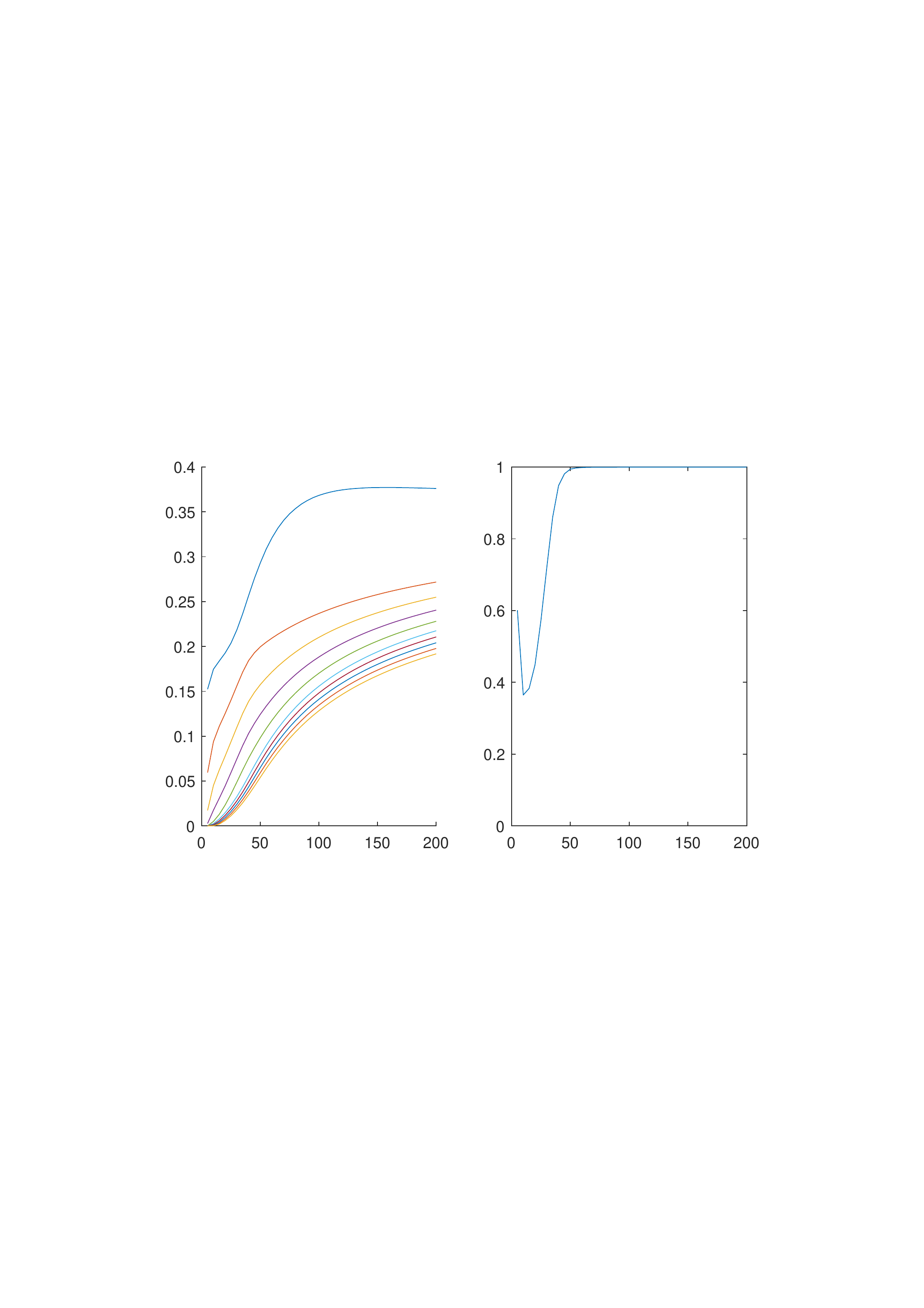} \vspace*{-6.8cm}
\caption{Mean empirical risk for PCA projecting onto a subspace of dimension $1\le \tilde p\le 10$  (left) and mean operator norm of the difference between the projection onto the true subspace and the projection onto the subspace picked by PCA with $\tilde p=p$ (right) versus $k$ in the Dirichlet model with parameter $3$, $p=5$ and $d=100$}
 \label{fig:empriskdiri3_100_5}
\end{figure}

Next, we consider the Dirichlet model with total dimension $d=100$ when the limit measure is concentrated on a $p=5$ dimensional subspace. Figure \ref{fig:empriskdiri3_100_5} shows the mean empirical risk for PCA projecting on a $\tilde p\in\{1,\ldots,10\}$ dimensional subspace in the left plot and the mean operator norm of the difference between the estimated and the true projection matrix in the right plot. The empirical risk suggests to choose $\tilde p$ between 4 and 6 and $k$ not much larger than 50 for estimating the support of the limit measure.

\begin{figure}
\vspace*{-6.8cm}
\hspace*{-0.4cm}
\includegraphics[width=1.1\textwidth]{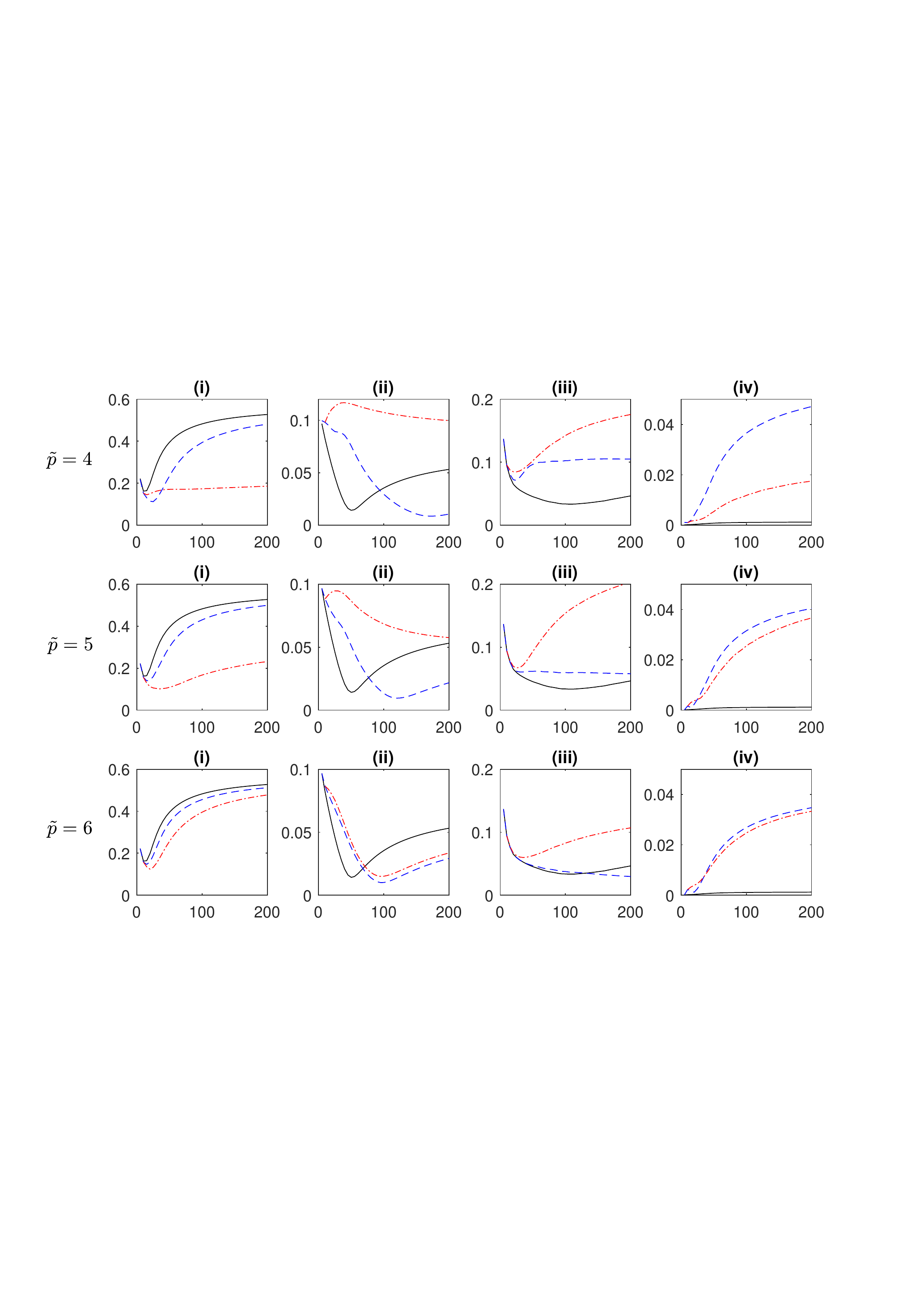} \vspace*{-6.5cm}
\caption{RMSE of the estimators of the probabilities (i)--(iv) based on $\hat H_{n,k}$ (black, solid), $\hat H_{n,k}^{PCA}$ (blue, dashed) and $\hat H_{n,k,10}^{PCA}$ (red, dash-dotted) vs.\ $k$ in the Dirichlet model with parameter $3$, $p=5$ and $d=100$; the upper plots correspond to PCA projections on subspaces of dimension $\tilde p=4$, the middle to $\tilde p=5$, and the lower to $\tilde p=6$}
 \label{fig:statsdiri3_100_5}
\end{figure}

Figure \ref{fig:statsdiri3_100_5} shows the RMSE of the different estimators of the probabilities (i)--(iv) with $t_{(i)}=0.4$ and true values $0.573, 0.072, 0.584$ and 0, respectively. Here, we have used PCA with $\tilde p=4$ in the upper row, $\tilde p=5$ in the mid row and $\tilde p=6$ in the lower row. As expected, in most cases the PCA procedures perform worse when they project on too low dimensional subspaces, yet in the cases (i) and (iv) the differences are moderate. At first glance somewhat surprisingly, overall the PCA procedures exhibit a better behavior for $\tilde p=6$ than for the ``correct'' value $\tilde p=5$. This may be explained by the fact that the extra dimension offers the opportunity to compensate for the difference between the subspaces minimizing the true resp.\ the empirical risk. This difference is expected to be larger if the dimension of the observed vectors is large, as can also be seen from the right plot in Figure \ref{fig:empriskdiri3_100_5}.

Again, the PCA based estimators for probability (i) outperform the standard procedure, but the other probabilities are more accurately estimated by the standard procedures if $\tilde p\le 5$ (though all estimators of (iv) perform reasonably well). For $\tilde p=6$, the RMSE of both variants of PCA based estimators of (ii) are very similar with a minimum value that is somewhat smaller than the minimum RMSE of the standard estimator. The performance of the standard estimator and the one based on classical PCA are almost identical for the probability (iii), while the estimator with PCA based on just $k=10$ largest observations is less accurate, probably because it is difficult to estimate a subspace of dimension 6 based on just 10 observations. It might help to increase the number of largest observations used to estimate the supporting subspace with the dimension $d$, but we do not explore this idea here in order not to overload the presentation.

\begin{figure}
\vspace*{-7.9cm}
\hspace*{0.5cm}
\includegraphics[width=0.9\textwidth]{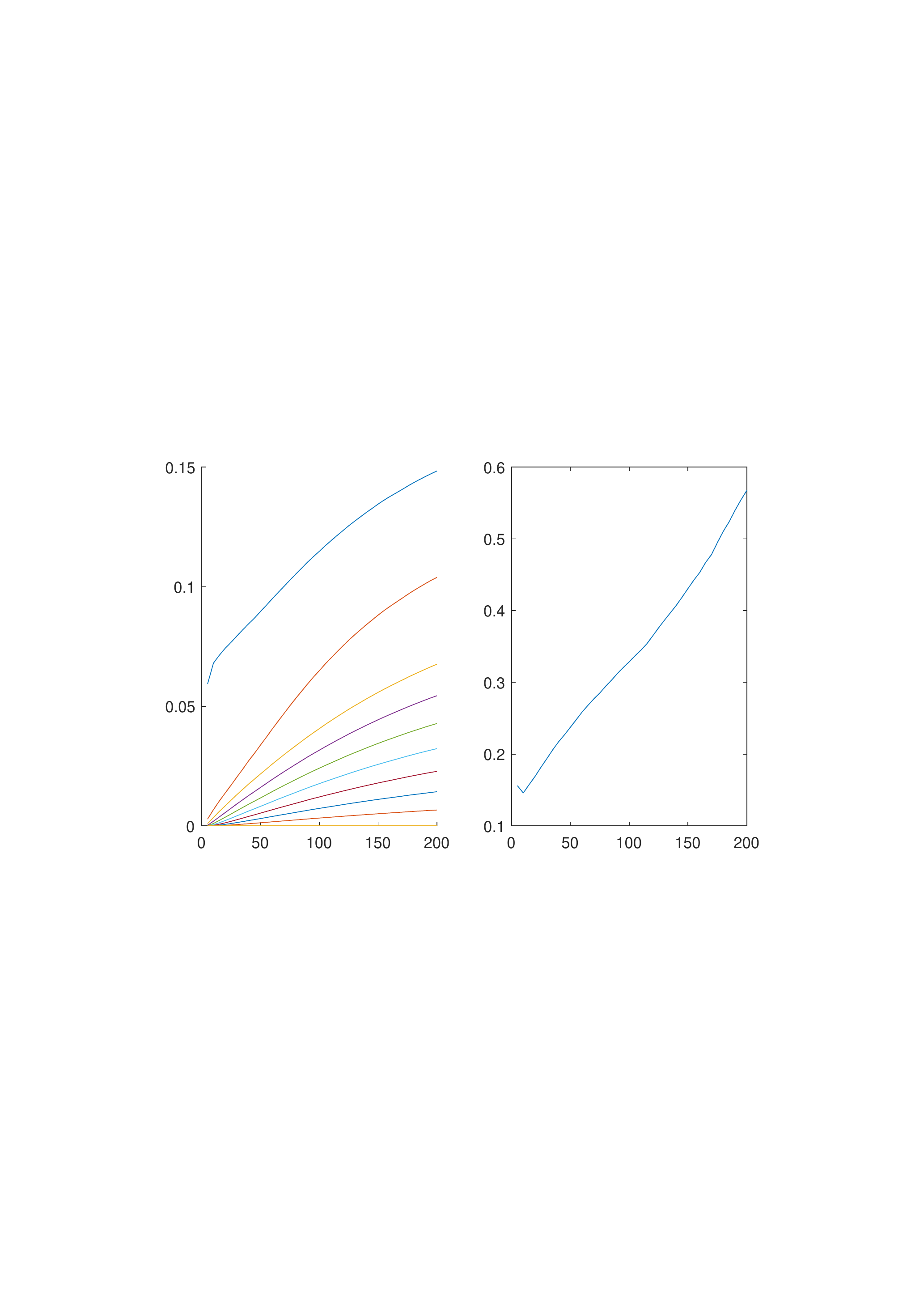} \vspace*{-5.5cm}
\caption{Mean empirical risk for PCA projecting onto a subspace of dimension $1\le \tilde p\le 10$  (left) and mean operator norm of the difference between the projection onto the true subspace and the projection onto the subspace picked by PCA with $\tilde p=p$ (right) versus $k$ in the Gumbel model with parameter $\vartheta=2$, $p=2$ and $d=10$}
 \label{fig:empriskGum05_10_2}
\end{figure}

The mean empirical risk and the mean operator norm of the difference matrix are shown for the Gumbel copula with $\vartheta=2$, $d=10$ and $p=2$ in Figure \ref{fig:empriskGum05_10_2}. Here, we have chosen Fr\'{e}chet marginal distributions with cdf $F(x)=\exp(-x^{-2})$, $x>0$. Based on the left plot, one may choose $\tilde p=2$, or perhaps $\tilde p=3$.

\begin{figure}
\vspace*{-7.5cm}
\includegraphics[width=\textwidth]{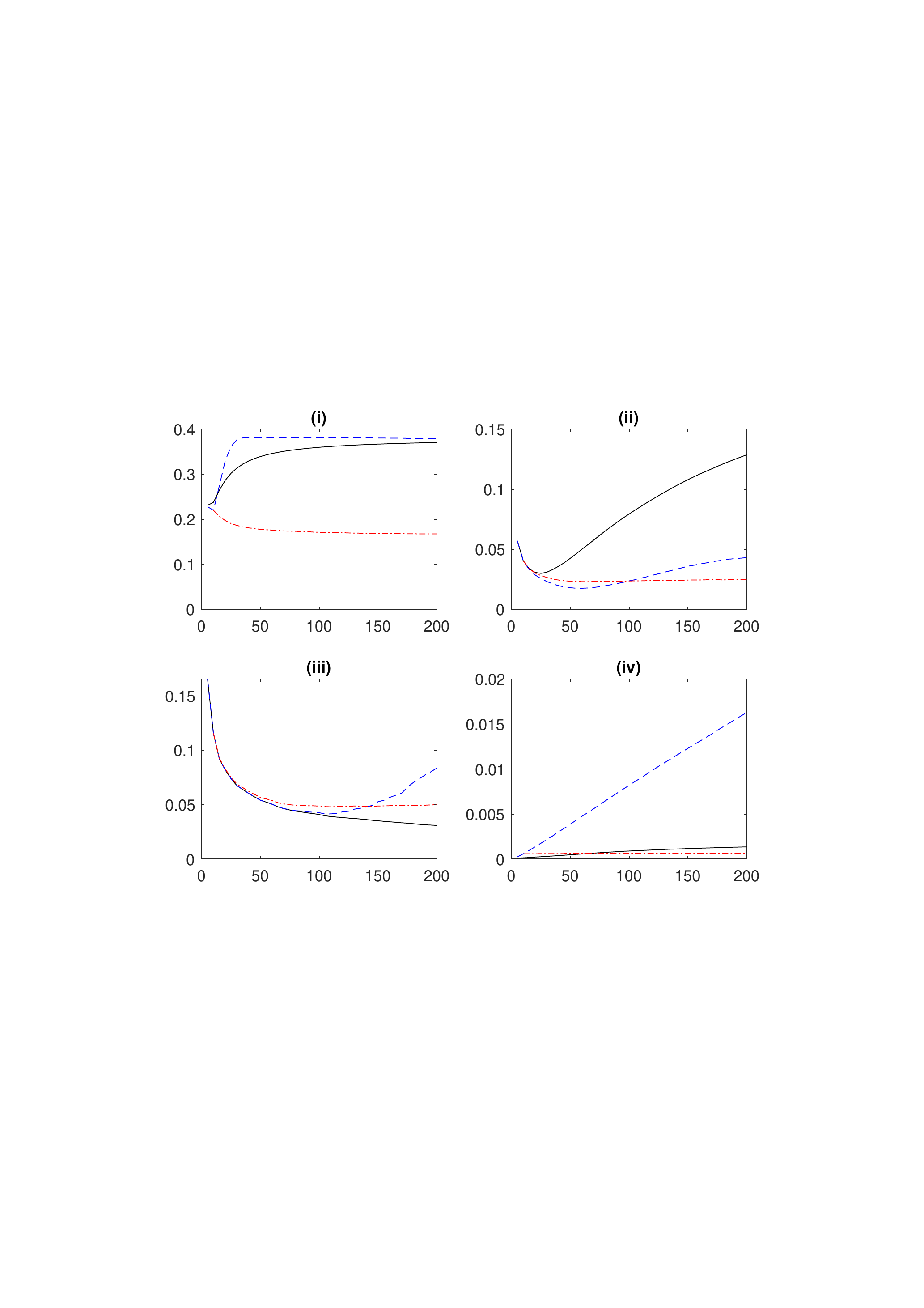} \vspace*{-6.3cm}
\caption{RMSE of the estimators of the probabilities (i)--(iv) based on $\hat H_{n,k}$ (black, solid), $\hat H_{n,k}^{PCA}$ (blue, dashed) and $\hat H_{n,k,10}^{PCA}$ (red, dash-dotted) vs.\ $k$ in the Gumbel model with parameter $\vartheta=2$, $p=2$ and $d=10$}
 \label{fig:statsGum05_10_2}
\end{figure}

Figure \ref{fig:statsGum05_10_2} displays the RMSE of the estimators of (i)--(iv) with PCA projecting on two-dimensional subspaces. Here $t_{(i)}=0.7$ and the true values for (i)--(iv) are 0.3813, $0.083$, $1/\sqrt{2}$ and 0. Now the PCA which uses the same number in both steps performs worse than the standard estimator for probability (i), better for (ii)  and very similar to the standard procedure for  (iii) and $k\le 100$. The PCA estimator which uses just 10 largest observations for estimating the support again outperforms the standard procedure for probability (i) and (ii), whereas it is has a slightly larger RMSE for the probability (iii). In any case, as in the Dirichlet model, its RMSE is rather insensitive to the choice of $k$. If one chooses $\tilde p=3$ (plots not shown here), then the classical PCA has almost the same RMSE as the standard procedure for the probabilities (i) and (iii). The same holds true for the estimator based on $\hat H_{n,k,10}^{PCA}$ for (iii) and (iv), while for (i) this estimator is still considerably more accurate than the standard procedure (though less so than for $\tilde p=2$) and both PCA procedures still outperform the standard estimator for probability (ii).

For the high-dimensional Gumbel model with $d=100$ and $p=5$, by and large the findings are similar to the ones observed for the Dirichlet model so that we do not show the corresponding plots. However, in this model $\tilde p=4$ can be ruled out by the empirical risk plot and both PCA based estimators outperform the standard estimator of (ii).

\begin{figure}
\vspace*{-6.5cm}
\includegraphics[width=\textwidth]{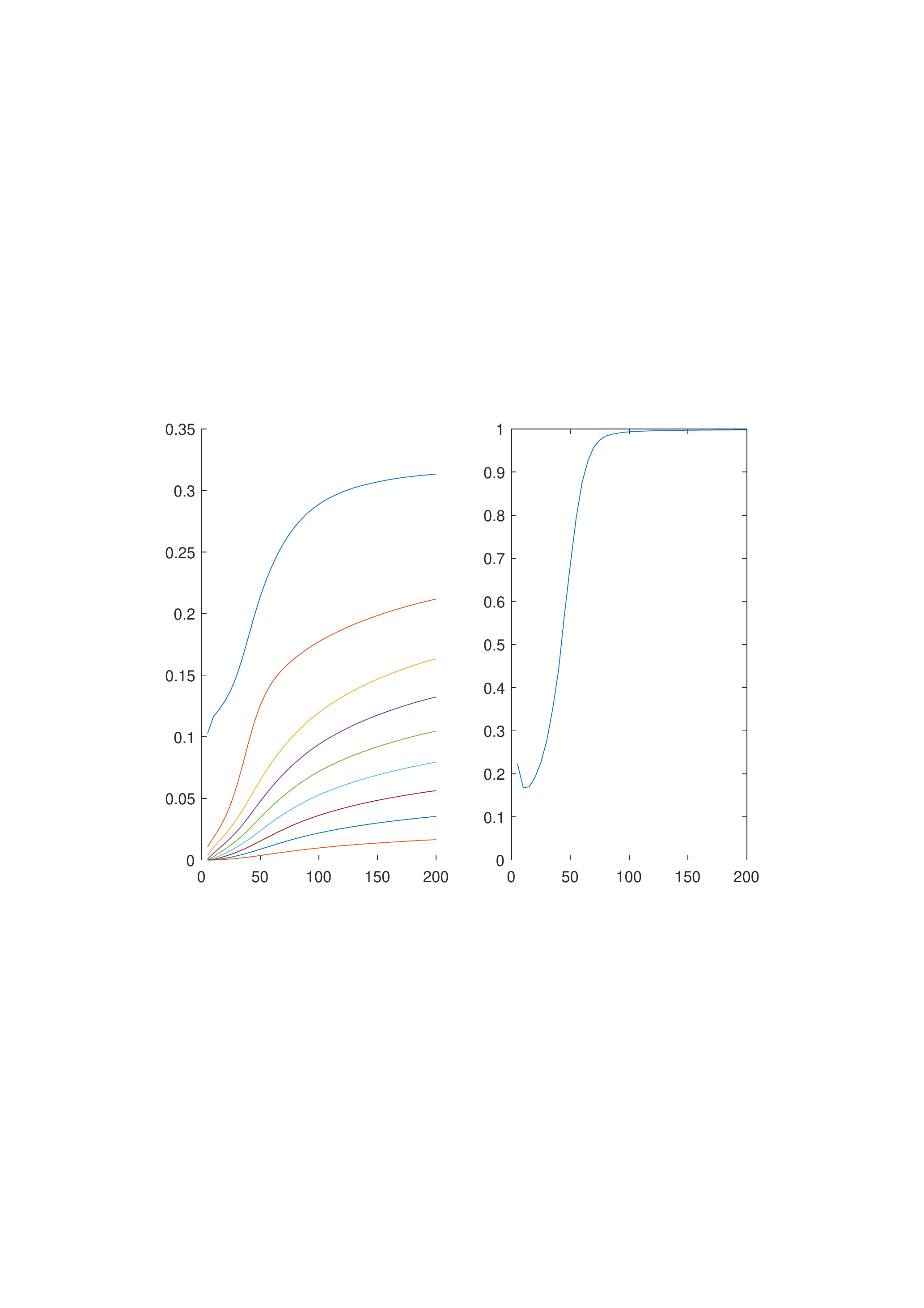} \vspace*{-6.0cm}
\caption{Mean empirical risk for PCA projecting onto a subspace of dimension $1\le \tilde p\le 10$  (left) and mean operator norm of the difference between the projection onto the true subspace and the projection onto the subspace picked by PCA with $\tilde p=p$ (right) versus $k$ for randomly rotated Dirichlet observations with parameter $3$, $p=2$ and $d=10$ }
 \label{fig:empriskdiri_3_2_10rot}
\end{figure}

Finally, we turn to the disturbed Dirichlet model with $d=10$ and $p=2$ where the observations are randomly rotated by an angular up to $\pi/10$, leading to true values for (i)--(iv) of 0.653 (with $t_{(i)}=0.65$), 0.185, 0.770 and 0. The corresponding plots are shown in the Figures \ref{fig:empriskdiri_3_2_10rot} and \ref{fig:statsdiri_3_2_10rot}. In view of the empirical risk, the choices $\tilde p\in\{2,3\}$ seem reasonable.

Again, the PCA procedure which uses the same largest observations in both steps performs better for the larger choice of $\tilde p$, whereas the performance of the other PCA procedure improves only for (ii), while it does not change much for (iii) and it deteriorates a bit for (i) and (iv). The PCA estimators perform better than the standard procedure for probability (i) and for (iii) if $k$ is large (for classical PCA only if $\tilde p=3$), whereas for (ii), roughly speaking,  overall the estimators perform similarly well with the standard procedure performing better for small values of $k$ and the PCA estimators for larger values.

\begin{figure}
\vspace*{-7.5cm}
\includegraphics[width=\textwidth]{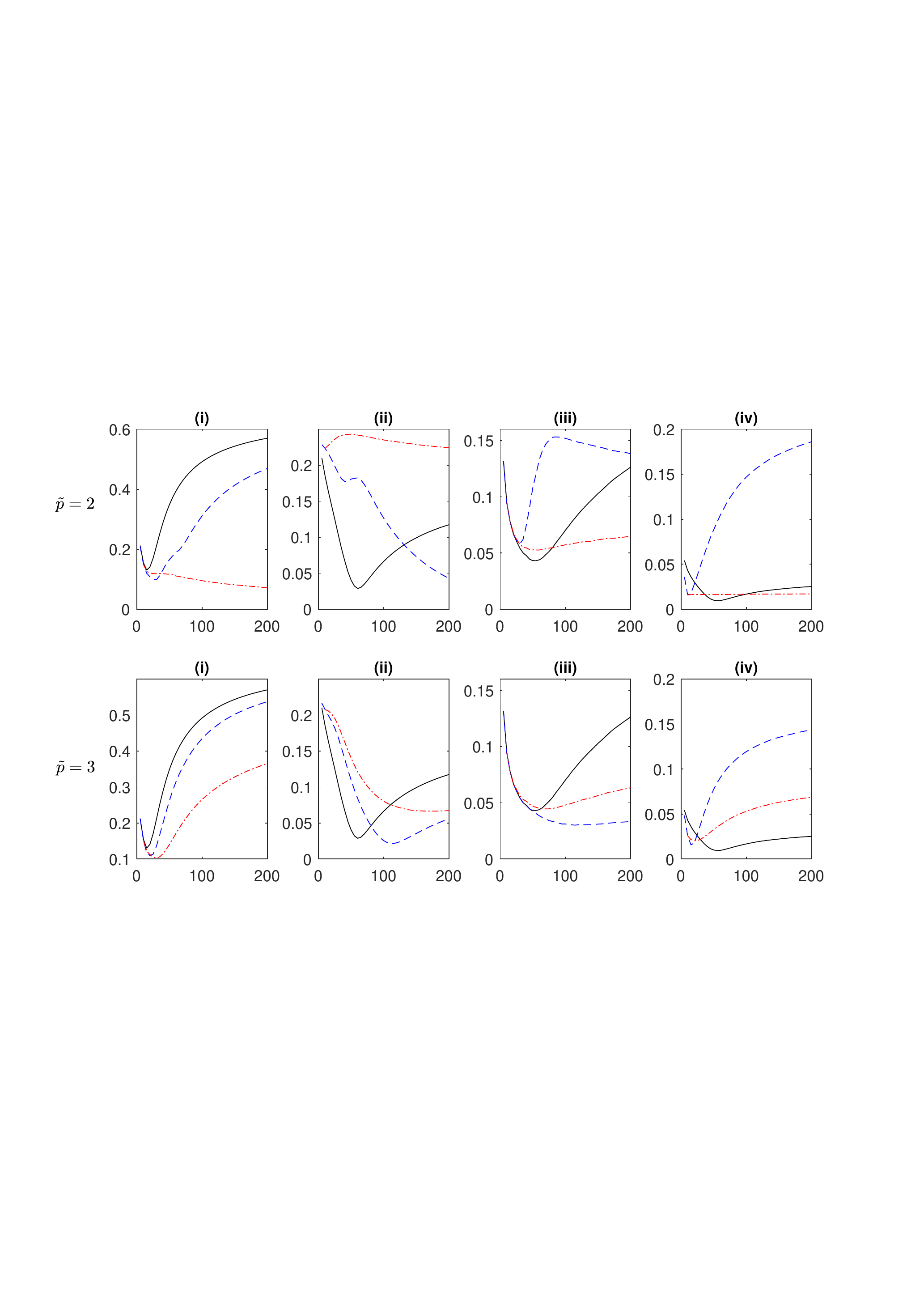} \vspace*{-6.5cm}
\caption{RMSE of the estimators of the probabilities (i)--(iv) based on $\hat H_{n,k}$ (black, solid), $\hat H_{n,k}^{PCA}$ (blue, dashed) and $\hat H_{n,k,10}^{PCA}$ (red, dash-dotted) vs.\ $k$ for randomly rotated Dirichlet observations with parameter $3$, $p=2$ and $d=10$; the upper plots correspond to PCA projections on subspaces of dimension $\tilde p=2$, the lower to $\tilde p=3$}
 \label{fig:statsdiri_3_2_10rot}
\end{figure}

To sum up, the plot of the empirical risk seems a useful tool to choose the dimension of the subspace onto which the PCA procedure projects. In particular, for the PCA method which uses the same number of largest observations to estimate the support and to calculate the estimator of the spectral measure, in case of doubt it seems advisable to project onto a subspace of higher dimension. While the PCA step does not always improve the estimator of the spectral measure, in most cases the resulting estimators seem competitive with the standard ones if $\tilde p$ is chosen appropriately, and for probability (i) they are superior. In particular the PCA estimators which determine the support based only on the largest 10 observations often exhibit a desirable insensitivity to the choice of largest observations used to estimate the spectral measure.

\section{Appendix: Details of the proof of \eqref{eq:Blanchardbound}}
\label{app:proof}

Recall that $Z_i$ are iid random variables  whose distribution equals the conditional distribution of $\ang$ given $\|X\|>t$.
Let
$$ \phi^\pm(z_1,\ldots,z_\ell) := \sup_{V\in\VV_p} \pm \Big( \frac 1\ell \sum_{i=1}^\ell \|\proj_V^\perp z_i\|^2-P\|\proj_V^\perp Z_1\|^2\Big).
$$
First note that
$$ \big|\phi^\pm(z_{1:\ell})-\phi^\pm(z_{1:i-1},\tilde z_i,z_{i+1:\ell})\big| \le \sup_{V\in\VV_p}\frac 1\ell \big|\|\proj_V^\perp z_i\|^2-\|\proj_V^\perp \tilde z_i\|^2\big| \le \frac 1\ell
$$
for all $z,\tilde z\in B_1(0)$. Thus a simple version of the bounded difference inequality (see, e.g., Theorem 3.1 of \cite{mcdiarmid1998concentration}) gives
$$ \PP\big\{ \phi^\pm(Z_{1:\ell})- \esp \phi^\pm(Z_{1:\ell})\ge u\big\} \le\exp(-2\ell u^2), \quad \forall u>0.
$$
Exactly in the same way as in the proof of Lemma \ref{lem:expectationmaxdiff}, one obtains
\begin{align*}
 \esp \phi^\pm(Z_{1:\ell}) & \le \Big[ \frac{p\wedge (d-p)}\ell \esp\|ZZ^\top-\esp ZZ^\top\|_{HS}^2\Big]^{1/2}\\
  & =  \Big[ \frac{p\wedge (d-p)}\ell \big(\esp\|ZZ^\top\|_{HS}^2- \|\esp ZZ^\top\|_{HS}^2\big)\Big]^{1/2}.
\end{align*}
Let $\tilde Z$ be an independent copy of $Z$. Then
$$ \esp\|ZZ^\top-\tilde Z \tilde Z^\top\|_{HS}^2 = 2\esp\|Z Z^\top\|_{HS}^2 - 2\esp \langle ZZ^\top,\tilde Z\tilde Z^\top\rangle_{HS}
$$
with
$$ \esp \langle ZZ^\top,\tilde Z\tilde Z^\top\rangle_{HS} = \esp\big( \esp(\langle ZZ^\top,\tilde Z\tilde Z^\top\rangle_{HS}\mid Z)\big) = \esp\langle ZZ^\top,\esp \tilde Z\tilde Z^\top\rangle_{HS}
= \|\esp ZZ^\top\|_{HS}^2.
$$
To sum up, so far we have shown that
$$ \PP\Big\{ \phi^\pm(Z_{1:\ell})\ge \Big[ \frac{p\wedge (d-p)}{2\ell}  \esp\|ZZ^\top-\tilde Z \tilde Z^\top\|_{HS}^2\Big]^{1/2}+u\Big\} \le \exp(-2\ell u^2), \quad \forall u\ge 0.
$$

Next consider the U-statistic $U:=(\ell(\ell-1))^{-1} \sum_{i,j=1}^\ell g(Z_i,Z_j)$ with
$$ g(z,\tilde z) := \|zz^\top-\tilde z\tilde z^\top\|_{HS}^2 \le (\|zz^\top\|_{HS}+\|\tilde z\tilde z^\top\|_{HS})^2\le 4.
$$
By equation (5.7) of \cite{hoeffding1963}, one has
$$ \PP\{U-\esp U\ge 2v\} \le \exp\big(-2\floor{\ell/2} (2v)^2/16\big)= \exp\big(-\floor{\ell/2}v^2/2\big),\quad \forall v\ge 0,
$$
with $\esp U=\esp\|ZZ^\top-\tilde Z\tilde Z^\top\|_{HS}^2$. Hence,
\begin{align*}
  \PP \Big\{ & \max\big(\phi^+(Z_{1:\ell}),\phi^-(Z_{1:\ell})\big) \ge \Big[ \frac{p\wedge (d-p)}{2\ell} \Big( \frac 1{\ell(\ell-1)} \sum_{i,j=1}^\ell \|Z_iZ_i^\top-Z_jZ_j^\top\|_{HS}^2 +2v\Big)\Big]^{1/2}\\
   & { } +u\Big\}
   \le 2\exp(-2\ell u^2)+\exp\big(-\floor{\ell/2}v^2/2\big), \quad \forall u,v\ge 0.
\end{align*}
This, however, is equivalent to \eqref{eq:Blanchardbound}, because the joint distribution of
$\max\big(\phi^+(Z_{1:\ell}),$ $\phi^-(Z_{1:\ell})\big)$ and $\sum_{i,j=1}^\ell \|Z_iZ_i^\top-Z_jZ_j^\top\|_{HS}^2$ is the same as the joint conditional distribution of $\sup_{V\in\VV_p}|\hat R_t(V)-R_t(V)|$ and $\sum_{i,j=1}^\ell  \|\ang_{(i)} \ang_{(i)}^\top-\ang_{(j)}\ang_{(j)}^\top\|_{HS}^2$, given $N_t=\ell$.

\bigskip

{\bf Acknowledgements:} Holger Drees was partly supported by DFG grant DR271/6-2. Anne Sabourin was partly supported by the Chaire `Stress testing' from \'{E}cole Polytechnique and BNP Paribas.

\bibliographystyle{apalike}
\bibliography{PCA-extremes}

\end{document}